\newtheorem{lemma}{Lemma}[section]
\newtheorem{theorem}[lemma]{Theorem}
\newtheorem{prop}[lemma]{Proposition}
\newtheorem{question}[lemma]{Question}
\newtheorem{cor}[lemma]{Corollary}
\newtheorem{claim*}{Claim}
\newtheorem{thm}[lemma]{Theorem}
\newtheorem{defn}[lemma]{Definition}
\newtheorem*{lemmaA2}{Lemma A.2}
\theoremstyle{remark}
\newtheorem{remark}[lemma]{Remark}
\newtheorem{example}[lemma]{Example}
\newcommand{\im}{\operatorname{im}}
\newcommand{\ZZ}{{\mathbb Z}}
\newcommand{\Tor}{\operatorname{Tor}}
\newcommand{\Hom}{\operatorname{Hom}} %done
\newcommand{\Ext}{\operatorname{Ext}} %done
\newcommand{\coker}{\operatorname{coker}}
\newcommand{\NF}{F(\mathbf{f})}
\newcommand{\defi}[1]{\textsf{#1}} % for defined terms
\newcommand{\BQ}{\ensuremath{B_\mathbb Q}}
\newcommand{\Bmod}{\ensuremath{B_{\operatorname{mod}}}}
\newcommand{\Bint}{\ensuremath{B_\text{int}}}
\def\BS{Boij-S\"oderberg }
\def\triplearrow#1#2{{\entrymodifiers={+!!<0pt,\fontdimen22\textfont2>}
 \def\objectstyle{\displaystyle}
 \xymatrix{   \ar@<0.7ex>^-{#1}[r]\ar[r]\ar@<-0.7ex>_-{#2}[r] & }}}
\title{Filtering Free Resolutions}
\author{David Eisenbud}
\address{Department of Mathematics, University of California,
	Berkeley, CA 94720, USA}
\email{eisenbud@math.berkeley.edu}
\author{Daniel Erman}
\address{Department of Mathematics, University of Wisconsin-Madison,
	Madison, WI 53706, USA}
\email{derman@math.wisc.edu}
\urladdr{http://www.math.wisc.edu/~derman/}
\author{Frank-Olaf Schreyer}
\address{Mathematik und Informatik, Universit\"{a}t des Saarlandes, Campus E2 4, D-66123 Saarbr\"{u}cken, Germany}
\email{schreyer@math.uni-sb.de}
\thanks{The first author was partially supported by an NSF grant, and the second author was partially supported by an NDSEG fellowship and an NSF postdoctoral fellowship.}
\subjclass[2010]{13D02, 13C05, 13C14}	
\begin{document}
\maketitle
\begin{abstract}
A recent result of Eisenbud-Schreyer and Boij-S\"oderberg proves that the Betti diagram of any graded module decomposes as a positive rational linear combination of pure diagrams.  When does this numerical decomposition correspond to an actual filtration of the minimal free resolution?  Our main result gives a sufficient condition for this to happen.  We apply it to show the non-existence of free resolutions with some plausible-looking Betti diagrams and to study the semigroup of quiver representations of the simplest ``wild'' quiver.
\end{abstract}

\section{Introduction}
\noindent  Let $k$ be a  field, let $S:=k[x_1, \dots, x_n]$ be the polynomial ring, and let $M$ be a finitely generated graded $S$-module. We write
\[
\xymatrix{
F^{M}_{\phantom{p}}:& 0\ar[r]&F^{M}_p \ar[r]^{\phi_p}& \dots\ar[r]^{\phi_2}& F^{M}_1 \ar[r]^{\phi_1} &F^{M}_0
}
\]
for the graded minimal free resolution of $M$.  We define $\beta_{i,j}(F^{M}) = \beta_{i,j}(M)$ by the formula
\[
F^M_i=\bigoplus_{j\in \mathbb Z} S(-j)^{\beta_{i,j}(M)}.
\]
The underlying question of this paper is as follows.
\begin{question}\label{question:beyond0}
When does a knowledge of the numbers $\beta_{i,j}$
imply that the module $M$ decomposes as a direct sum? More generally,
when can we deduce from the Betti numbers that the $M$ has a submodule
$M'$ whose free resolution $F^{M'}$ is a summand, term by term, of $F^{M}$?
\end{question}

We will say that a submodule $M' \subset M$ is \defi{cleanly embedded} if it satisfies the condition in the last sentence of the question---that is, if the natural map 
$$
{\rm Tor}_{i}^{S}(M', k) \to{\rm Tor}_{i}^{S}(M, k)
$$
is a monomorphism for every $i$. Of course any summand is cleanly embedded.

Here is a well-known example where knowledge of the $\beta_{i,j}$ allows us to predict a summand. Suppose that $M$ is zero in negative degrees, that is, $\beta_{0,j}(M) = 0$ for $j<0$. If $\beta_{n,n}(M)= b$ then $M$ contains $\bigl(S/(x_{1},\dots,x_{n})\bigr)^{b}$ as a direct summand. (Reason: $\beta_{n,n}(M)$ is, by local duality, equal to the component of the socle of $M$ in degree $0$.) 

Question~\ref{question:beyond0} has a special interest in light of \BS Theory:
The conjecture of Boij and S\"oderberg, proven by Eisenbud and Schreyer in \cite{eis-schrey1} and then extended in \cite{boij-sod2}, says that the Betti diagram of $M$ can be written
uniquely as a positive rational linear combination 
$$
\beta(M) = \sum_{t = 0}^{s}c_{t}\pi_{d^{t}}
$$
of \emph{pure} Betti diagrams $\pi_{d^{t}}$ where the \emph{degree sequences} $d^{t}$ 
satisfy $d^0 < d^1 < \ldots < d^s$. Here a degree sequence is an element
%are distinct strictly increasing sequence of integers (or $+\infty$) 
$$
d= (d_{0}, \dots, d_{n})\in (\ZZ\cup \{\infty\})^{n+1} \text{ with } d_i+1\le d_{i+1} \text{ for all } i ,
$$
and the (rational) Betti diagram $\pi_{d}$ is given by
\begin{equation}\label{eqn:pid}
\beta_{i,j}(\pi_{d}) = \begin{cases} 0&j\ne d_i\\
\prod_{k\ne i, d_k<\infty}\frac{1}{|d_i-d_{k}|} &j=d_i,
\end{cases}
\end{equation}
and $d^{t}\le d^{t+1}$ means that $d^{t}_{i}\leq  d^{t+1}_{i}$ for
every $i$.
(see \S\ref{sec:background} for the definition of a pure diagram and a summary of the necessary part of \BS theory).

With this result in mind it is natural to refine Question~\ref{question:beyond0} and ask:

\begin{question}\label{question:beyond}\label{question:beyond1}
When does the decomposition of the Betti diagram of a graded module $M$ into pure diagrams arise from some filtration of $M$ by cleanly embedded submodules?

In particular, when does the Betti diagram $c_{0}\pi_{d^{0}}$ correspond to the
resolution of a cleanly embedded submodule $M'\subset M$?
\end{question}

Certainly such a submodule $M'$  \emph{does not} always exist:  often the numbers
$\beta_{i,j}(c_{0}\pi_{d^{0}})$ are not even integers, and there are subtler reasons
as well (see Example~\ref{ex:first filtration} and \S\ref{sec:beyond}). 
However, our main result says such a module $M'$ \emph{does} exist when 
$d^{0}$ is ``sufficiently separate'' from the rest of the $d^{t}$. To make this precise, we write
%$$
%d^{0}\ll d^{1} \text{ if } d^{0}<d^{1} \text{ and } d^{0}_{2}\leq d^{1}_{1} . %\text{ for } i=1.
%$$
\begin{equation}\label{defn:ll}\tag{*}
d^0\ll d^1 \text{ if } d^0<d^1, d^0_2\leq d^1_1 \text{ and if } d^0_i \leq d^1_2+i-1\text{ for } i>2.
\end{equation}

\begin{thm}[(Existence of a cleanly embedded pure submodule)]\label{thm:puresexist}
Let $\dim(S)\geq 2$ and let $M$ be a finite length graded $S$-module with Boij-S\"oderberg decomposition 
$$
\beta(M)=\sum_{i=0}^s c_{i}\pi_{d^i}.
$$
\begin{enumerate}
\item\label{thm:puresexist:1} If  $d^0 \ll d^{1}$, then there is a cleanly embedded 
submodule $M'\subset M$ with $\beta(M') = c_{0}\pi_{d^0}$.
In particular, the diagram $c_{0}\pi_{d^0}$ has integer entries. 
\item\label{thm:puresexist:2} If $d^0 \ll d^{1}$ and $d^{0}_{n}-n < d^{1}_{1}$, then  $M'$ is a direct summand of $M$.
\end{enumerate}
\end{thm}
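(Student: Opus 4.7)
The plan is to prove part (1) by constructing a direct-summand subcomplex $G_\bullet\subseteq F^M_\bullet$ of the minimal free resolution of $M$ whose Betti diagram is $c_0\pi_{d^0}$, and then to derive part (2) from an $\Ext^1$-vanishing argument. Suppose such a $G_\bullet$ is built, with each $G_i$ a graded direct summand of $F^M_i$, the complex $G_\bullet$ exact in positive degrees, and $M':=H_0(G_\bullet)$ genuinely embedded in $M$. Then the short exact sequence of complexes $0\to G_\bullet\to F^M_\bullet\to F^M_\bullet/G_\bullet\to 0$ exhibits the free complex $F^M_\bullet/G_\bullet$ as a (not-necessarily-minimal) resolution of $M'':=M/M'$, giving the pointwise bound $\beta(M'')\le \beta(M)-c_0\pi_{d^0}$. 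Combining this with the Tor long exact sequence bound $\beta_i(M)\le \beta_i(M')+\beta_i(M'')$ and with $\beta(M')\le c_0\pi_{d^0}$ (from $G_\bullet$ being a free resolution of $M'$) forces all three inequalities to be equalities, which pins down $\beta(M')=c_0\pi_{d^0}$ (so $c_0\pi_{d^0}$ has integer entries) and forces every connecting Tor map to vanish, i.e.\ the clean embedding of $M'\subseteq M$.

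The heart of the argument is the construction of $G_\bullet$. I would define $G_i$ to be the degree-$d^0_i$ part of $F^M_i$, which is a free summand of $F^M_i$ of rank $r_i:=c_0\pi_{d^0}(i,d^0_i)$: the separation $d^0\ll d^1$ together with the Boij--S\"oderberg decomposition shows that the only pure summand $\pi_{d^t}$ of $\beta(M)$ with nonzero contribution at position $(i,d^0_i)$ is $\pi_{d^0}$ itself, so $\beta_{i,d^0_i}(M)=r_i$. The subcomplex condition $\phi_i^M(G_i)\subseteq G_{i-1}$ then decomposes into components indexed by pairs $(i-1,d^t_{i-1})$. The components with $d^t_{i-1}=d^0_i$ have matrix entries of degree zero, i.e., scalars, which vanish by the minimality of $F^M_\bullet$. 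The components with $d^t_{i-1}>d^0_i$ vanish by degree. The remaining potentially obstructive components---those with $d^t_{i-1}<d^0_i$ and $t\ge 1$---must be eliminated by a simultaneous change of basis on $F^M_\bullet$ above the $\pi_{d^0}$-piece; the condition $d^0_2\le d^1_1$ is precisely what ensures that such a change of basis exists, propagated inductively up the resolution.

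For part (2), under the stronger hypothesis $d^0_n-n<d^1_1$, the finite-length module $M'$ has regularity $d^0_n-n$ and is therefore concentrated in degrees $\le d^0_n-n$, while $F^{M''}_1$ is generated in degrees $\ge d^1_1$. Consequently $\Hom_S(F^{M''}_1,M')_0=0$, which yields $\Ext^1_S(M'',M')=0$, so the short exact sequence $0\to M'\to M\to M''\to 0$ splits and $M'$ is a direct summand of $M$. The main obstacle in the whole argument is the change-of-basis step in the construction of $G_\bullet$, where one must simultaneously annihilate the low-degree obstructive components of $\phi_i^M$ for all $i\ge 1$; the separation hypothesis $d^0\ll d^1$ is calibrated precisely to allow this inductive construction to succeed, while the numerical and degree parts of the proof (the pointwise bounds and the $\Ext^1$-vanishing) are then essentially formal.
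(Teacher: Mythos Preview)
Your argument for part~(2) is correct and matches the paper's. The problem is in part~(1), and it is not the change-of-basis step you flag as the ``main obstacle''; it is the rank claim that precedes it.

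You assert that $d^0\ll d^1$ forces $\pi_{d^0}$ to be the \emph{only} summand contributing at position $(i,d^0_i)$, so that $\beta_{i,d^0_i}(M)=c_0\,\beta_{i,d^0_i}(\pi_{d^0})$ for every $i$. This is false. The condition $d^0_2\le d^1_1$ yields strict inequalities $d^0_1<d^1_1$ and $d^0_2<d^1_2$, but says nothing about $i=0$ or $i\ge 3$: one can have $d^0_0=d^1_0$ and $d^0_i=d^1_i$ for $i\ge 3$. Example~1.6 in this very paper has $d^0=(0,1,2,5)$, $d^1=(0,2,3,5)$, $d^2=(0,3,4,5)$, so all three pure diagrams contribute at $(0,0)$ and at $(3,5)$. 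In that example your $G_\bullet$ has ranks $(10,15,10,10)$ rather than the required $(6,15,10,1)$, and neither the inequality $\beta(M')\le c_0\pi_{d^0}$ nor the squeeze argument you set up can get started. A second, independent gap is that you give no reason why $G_\bullet$ should be acyclic; exactness of a numerical subcomplex is exactly the hard part, and your change-of-basis sketch does not address it.

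The paper's route avoids both issues. It takes a deliberately oversized numerical subcomplex (the ``North fork''), which is a subcomplex for trivial degree reasons, and sets $N$ to be the cokernel of its first map. The separation $d^0\ll d^1$ is used only at positions $i=1,2$ to pin down the ratio $\beta_{1,d^0_1}(N)/\beta_{2,d^0_2}(N)$. The decisive input is then the Monotonicity Principle (Theorem~\ref{lem:monotonicity}): among all pure diagrams that could possibly appear in $\beta(N)$, only $\pi_{d^0}$ realizes that ratio, so the Boij--S\"oderberg decomposition of $\beta(N)$ is forced to be $c_0\pi_{d^0}$ plus a free diagram. A splitting lemma then peels off $M'$ with $\beta(M')=c_0\pi_{d^0}$, and degree arguments show $M'\hookrightarrow M$ cleanly. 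In short, the paper never tries to identify the correct rank-$r_i$ summand of $F^M_i$ directly; it lets the convex geometry of $\BQ$ do that work, and the Monotonicity Principle is the tool your outline is missing.
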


\noindent With corresponding hypotheses on all $d^{i}$, we obtain a full clean filtration (as in Definition~\ref{defn:pure filtration}).

\begin{cor}\label{cor:puresexist-iterated}
If, with hypotheses as in Theorem~\ref{thm:puresexist}, 
$d^{0}\ll d^{1}\ll \cdots \ll d^{s}$, then $M$ admits a filtration 
$0= M^{0} \subset \cdots \subset M^{s}\subset M^{s+1}$
by cleanly embedded submodules $M^{i}$
such that
$
\beta(M^{i+1}/M^{i})=c_i\pi_d^{i}.
$
%\qed
\end{cor}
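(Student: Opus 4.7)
The plan is to induct on $s$. The base case $s=0$ is the trivial filtration $0 \subset M^1 = M$. For the inductive step, apply part (1) of Theorem~\ref{thm:puresexist}, using the hypothesis $d^0 \ll d^1$, to produce a cleanly embedded submodule $M^1 \subset M$ with $\beta(M^1) = c_0 \pi_{d^0}$.

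Because $M^1 \subset M$ is cleanly embedded, the long exact sequence of $\Tor^S(-,k)$ attached to $0 \to M^1 \to M \to M/M^1 \to 0$ splits into short exact sequences of Tors, giving
\[
\beta(M/M^1) = \beta(M) - \beta(M^1) = \sum_{i=1}^s c_i \pi_{d^i}.
\]
By uniqueness of the Boij-S\"oderberg decomposition, this is the decomposition of $M/M^1$. Moreover $M/M^1$ is still finite length, and the chain $d^1 \ll d^2 \ll \cdots \ll d^s$ is inherited, so the inductive hypothesis applies to $M/M^1$ and yields a clean filtration $0 = N^0 \subset N^1 \subset \cdots \subset N^s = M/M^1$ with $\beta(N^j/N^{j-1}) = c_j \pi_{d^j}$. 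Lifting through the quotient $M \to M/M^1$, define $M^{j+1}$ to be the preimage of $N^j$ for $0 \le j \le s$. Then $M^{s+1} = M$ and $M^{j+1}/M^j \cong N^j/N^{j-1}$ has the required Betti diagram.

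What remains is to check that each $M^{j+1}$ is cleanly embedded in $M$. Compare the short exact sequences $0 \to M^1 \to M^{j+1} \to N^j \to 0$ and $0 \to M^1 \to M \to M/M^1 \to 0$, related by the identity on $M^1$ and the inclusions on the middle and right terms. Apply $\Tor_i^S(-,k)$ to obtain a ladder of long exact sequences. An element $x \in \ker(\Tor_i^S(M^{j+1},k) \to \Tor_i^S(M,k))$ maps to zero in $\Tor_i^S(M/M^1,k)$; since $N^j \subset M/M^1$ is cleanly embedded, the map $\Tor_i^S(N^j,k) \to \Tor_i^S(M/M^1,k)$ is injective, so $x$ also maps to zero in $\Tor_i^S(N^j,k)$. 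Thus $x$ lifts to some $y \in \Tor_i^S(M^1,k)$ by exactness of the upper row; commutativity of the left square forces $y$ to map to zero in $\Tor_i^S(M,k)$, and clean embedding of $M^1 \subset M$ then gives $y = 0$, so $x = 0$.

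The main obstacle is somewhat illusory: all substantive content lies in Theorem~\ref{thm:puresexist}. The remaining work is the transitivity-type observation that clean embedding is preserved under this two-step extension $M^1 \subset M^{j+1} \subset M$ built from a clean embedding of $M^1$ in $M$ together with a clean embedding of $M^{j+1}/M^1$ in $M/M^1$; this is exactly the diagram chase above.
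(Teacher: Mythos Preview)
Your proof is correct and follows essentially the same strategy as the paper's: apply Theorem~\ref{thm:puresexist} to produce $M^1$, compute $\beta(M/M^1)$, and iterate. The paper's own proof is terser---it simply notes that the maps $F^{M^1}_j \to F^M_j$ are split injections (for degree reasons, as established inside the proof of Theorem~\ref{thm:puresexist}) and then says ``iterate''; you instead derive $\beta(M/M^1)=\beta(M)-\beta(M^1)$ directly from the definition of clean embedding via the long exact sequence in $\Tor$, and you supply the diagram chase verifying that each lifted $M^{j+1}$ is cleanly embedded in $M$, a point the paper leaves implicit.
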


In the following, and in the rest of the paper, we write the \defi{Betti diagram} of $M$, $\beta(M)$, as a matrix whose entry in column $i$ and row $i+j$ is $\beta_{i,j}(M)$.  In examples, we follow the convention that the upper left entry of $\beta(M)$ corresponds to $\beta_{0,0}(M)$.

\begin{example}
Let $S=k[x,y,z]$.  If $M$ is any module with
\[
\beta(M)=\begin{pmatrix}
4&8&6&-\\
-&6&8&4
\end{pmatrix}=\begin{pmatrix}
3&8&6&-\\
-&-&-&1
\end{pmatrix}
+
\begin{pmatrix}
1&-&-&-\\
-&6&8&3
\end{pmatrix},
\]
then, since the corresponding degree sequences are $d^0=(0,1,2,4)$ and $d^1=(0,2,3,4)$, Theorem~\ref{thm:puresexist}\eqref{thm:puresexist:2} implies that $M$ splits as $M=M^1\oplus M^2$ with
\[
\beta(M^1)=\begin{pmatrix}
3&8&6&-\\
-&-&-&1
\end{pmatrix}
\quad \text{ and } 
\beta(M^2)=\begin{pmatrix}
1&-&-&-\\
-&6&8&3
\end{pmatrix}.
\]
\end{example}

The technique we develop to prove Theorem~\ref{thm:puresexist} actually yields the result in more general (but harder to formulate) circumstances; see \S\ref{sec:beyond}.

\subsection*{Application: The Insufficiency of Integrality}
One application of  Theorem~\ref{thm:puresexist} is to prove the non-existence of 
resolutions having otherwise plausible-looking Betti diagrams:
\begin{prop}\label{prop:sparserays}
Let $p\in \ZZ$ be any prime.  Then there exists a diagram $D$ with integral entries, such that $cD$ is the Betti diagram of a module if and only if $c$ is divisible by $p$. 
\end{prop}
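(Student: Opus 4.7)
\medskip
\noindent\emph{Proposal.}
The plan is to exhibit an integer diagram $D$ whose Boij--S\"oderberg decomposition has the form $D=c_0\pi_{d^0}+c_1\pi_{d^1}$ with $d^0\ll d^1$, chosen so that the leading pure summand $c_0\pi_{d^0}$ has integer entries but is realized as the Betti diagram of a module only after multiplying by $p$. Theorem~\ref{thm:puresexist}\eqref{thm:puresexist:1} will then translate this $p$-divisibility obstruction on the pure summand into the desired obstruction on $cD$.

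First I would construct $d^0$, $d^1$, $c_0$, $c_1$ with appropriate arithmetic properties. Work in $n\ge 3$ variables; the $n=2$ case must be excluded because, using the Herzog--K\"uhl identity $\sum_i(-1)^i/D_i=0$ together with a generic Hilbert--Burch presentation, every integer pure diagram in two variables is realized. Choose $d^0$ and $c_0\in\ZZ_{>0}$ with $\gcd(c_0,p)=1$ so that $c_0\pi_{d^0}$ has integer entries and the set of positive integers $q$ for which $qc_0\pi_{d^0}$ is a Betti diagram is exactly $p\ZZ_{>0}$. Then pick $d^1$ with $d^0\ll d^1$ and support disjoint from $\pi_{d^0}$ (e.g.\ by taking $d^1_0>d^0_n$), together with $c_1\in\ZZ_{>0}$ such that $c_1\pi_{d^1}$ is a Betti diagram (for instance, via a known pure resolution). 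Set $D:=c_0\pi_{d^0}+c_1\pi_{d^1}$; it has integer entries by construction.

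For the ``if'' direction, when $p\mid c$ the diagram $cc_0\pi_{d^0}$ is realized by the setup, and $cc_1\pi_{d^1}$ is realized by direct-summing copies of a fixed realizer of $c_1\pi_{d^1}$, so the direct sum of these two modules realizes $cD$. For the ``only if'' direction, suppose $\beta(M)=cD$. Since $D$ has the two-term decomposition above, the Boij--S\"oderberg decomposition of $cD$ is $cc_0\pi_{d^0}+cc_1\pi_{d^1}$, and because $d^0\ll d^1$, Theorem~\ref{thm:puresexist}\eqref{thm:puresexist:1} produces a cleanly embedded submodule $M'\subset M$ with $\beta(M')=cc_0\pi_{d^0}$. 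Thus $cc_0\pi_{d^0}$ must itself be the Betti diagram of a module, and the arithmetic arranged in the construction forces $p\mid cc_0$, hence $p\mid c$ since $\gcd(c_0,p)=1$.

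The main obstacle is the first step: producing an explicit pure diagram $c_0\pi_{d^0}$ whose realization semigroup inside the positive integer multiples of $\pi_{d^0}$ has minimum element equal to $p$ times the primitive integer multiple. This is the arithmetic heart of the construction and requires genuinely non-cone-level information about which integer multiples of a pure diagram can occur as Betti diagrams; the other steps reduce, via Theorem~\ref{thm:puresexist} and closure of the set of Betti diagrams under direct sums, to formal manipulations.
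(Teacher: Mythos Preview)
Your reduction via Theorem~\ref{thm:puresexist} is the same mechanism the paper uses, but the ``arithmetic heart'' you isolate is precisely the step the paper \emph{avoids}, and with good reason: you are asking for an integral pure diagram $c_0\pi_{d^0}$ whose realization semigroup on that ray is $p\ZZ_{>0}$. Producing, for a given prime $p$, an integral point on a pure ray that is not a Betti diagram (let alone one whose first realizable multiple is exactly the $p$-th) is not known in general; indeed, the paper flags immediately after the statement of Proposition~\ref{prop:sparserays} that it remains open whether all but finitely many integral points on a pure ray are Betti diagrams. So your proposal defers the entire content of the proposition to a black box that is at least as hard as (and plausibly harder than) the proposition itself.

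The paper's construction sidesteps this by \emph{not} requiring the leading pure summand to be integral. It builds $D$ as a sum $\tfrac{1}{p}\widetilde\pi_{d^0}+\tfrac{\alpha}{p}\widetilde\pi_{d^1}+\tfrac{1}{p}\widetilde\pi_{d^2}$ (three terms, in codimension~$3$, with $d^0=(0,1,2,p)$, $d^2=(0,p-2,p-1,p)$, and a Pfaffian degree sequence in the middle), arranging the non-integral pieces to cancel in the $\beta_{0,0}$ and $\beta_{3,p}$ entries so that $D$ itself is integral. Then Theorem~\ref{thm:puresexist} forces any module with $\beta(M)=cD$ to contain a cleanly embedded submodule with Betti diagram $\tfrac{c}{p}\widetilde\pi_{d^0}$, and the obstruction is simply that this diagram has \emph{non-integer} entries unless $p\mid c$ --- exactly as in Example~\ref{ex:first filtration}. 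The ``if'' direction then only needs $\widetilde\pi_{d^i}\in\Bmod$ for each $i$, which is classical. The idea you are missing is that integrality of $D$ does not force integrality of its Boij--S\"oderberg summands; exploiting that gap is what makes the argument work without any input about realizability on pure rays.
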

\noindent This result simultaneously strengthens parts $(2), (3)$ and $(4)$ of~\cite[Thm.\ 1.6]{erman-semigroup}. Its proof is given in \S\ref{sec:pathologies}. The following question, posed 
in \cite[Conjecture 6.1]{efw}, remains open:  do all but finitely many integral points on a ray of \emph{pure} diagrams correspond to the Betti diagram of a module?

\begin{example}\label{ex:first filtration}
There is no graded module $M$ of finite length with Betti diagram
\[
D:=
\begin{pmatrix}
2 & 3 & 2 & - \\
- & 3 &3 &-\\
- & 2&3&2
\end{pmatrix}.
\]
Reason: The \BS decomposition of $D$ is
\[
D = 
\frac{1}{5}
\begin{pmatrix}
6&15&10&-\\
-&-&-&-\\
-&-&-&1
\end{pmatrix}
+
\frac{3}{5}
\begin{pmatrix}
1&-&-&-\\
-&5&5&-\\
-&-&-&1
\end{pmatrix}
+
\frac{1}{5}
\begin{pmatrix}
1&-&-&-\\
-&-&-&-\\
-&10&15&6
\end{pmatrix}
\]
The corresponding degree sequences are $d^0=(0,1,2,5), d^1=(0,2,3,5)$ and $d^2=(0,3,4,5)$, so  Theorem~\ref{thm:puresexist} implies that a module with
Betti diagram $D$ would admit a cleanly embedded submodule $M'$ with
betti diagram
\[
\beta(M')=\frac{1}{5}
\begin{pmatrix}
6&15&10&-\\
-&-&-&-\\
-&-&-&1
\end{pmatrix} = 
\begin{pmatrix}
\frac{6}{5}&3&2&-\\
-&-&-&-\\
-&-&-&\frac{1}{5}
\end{pmatrix}.
\]
This is absurd, since the entries of the diagram are not integers.

Now consider the diagrams $cD$, where $c$ is a rational number. The same
argument implies that these are not Betti diagrams of modules of finite length unless
$c$ is an integral multiple of 5. On the other hand, 
if $R:= k[x,y,z]/(x,y,z)^{3}, \omega_R(3)$ is the twisted dual of $R$, and $R':=k[x,y,z]/(x^{2},y^{2},z^{2}-xy, xz, yz)$, then
\begin{equation}\label{eqn:5D}
\beta(R\oplus \omega_R(3) \oplus R'^{\oplus 3} )=\begin{pmatrix}
10 & 15 & 10 & - \\
- & 15 &15 &-\\
- & 10&15&10
\end{pmatrix}
=
5D.
\end{equation}
We conclude that $cD$ is the Betti diagram of a module of finite length if and only if $c$ is an integral
multiple of $5$.\qed
\end{example}

\smallskip\noindent{\bf Application: invariants of the representations of $\bullet\triplearrow{}{}\bullet$}
\label{ex:2x3}
It was proven in \cite[Thm\ ~1.3]{erman-semigroup} that the semigroup of all Betti diagrams of modules
with bounded regularity and generator degrees is finitely generated, and the generators
were worked out in some small examples. In those cases the semigroup coincides with
the set of integral points in the positive rational cone generated by
the Betti diagrams of modules. With the added power of 
Theorem~\ref{thm:puresexist} we can determine the generators in the first case where this
does not happen: the case of modules over $k[x,y,z]$ having only two nonzero graded components,
$M = M_{0}\oplus M_{1}$. 

This case has an  interpretation in the representation theory of quivers. Consider representations over $k$ of 
the quiver with three arrows:
$$
Q: \bullet \triplearrow{}{} \bullet
$$
The problem of classifying representations of $Q$ up to isomorphism (or, equivalently, classifying triples of matrices up to simultaneous equivalence) is famously of ``wild type''; the variety of classes of representations with a given dimension vector $D:=(\dim M_{0}, \dim M_{1}) $ has dimension that grows with $D$, and many components. 

The Betti diagram of $M$ provides a discrete invariant of such a representation.
The (Castelnuovo-Mumford) regularity of $M$ is 1, so the Betti diagram has the form
$$
\beta(M)=\begin{pmatrix}
\beta_{0,0}& \beta_{1,1}& \beta_{2,2}& \beta_{3,3} \\
\beta_{0,1}& \beta_{1,2}& \beta_{2,3}& \beta_{3,4} \\
\end{pmatrix}.
$$
Some of the numbers in this diagram are easy to understand: for example, $\beta_{3,3}$ is the dimension of the common kernel of the three matrices, and $\beta_{0,1}$ is the dimension of $M_{1}$ modulo the sum of the images of the matrices. Passing to an obvious subquotient, therefore, we may assume that
$\beta_{3,3} = \beta_{0,1} = 0$. In this case $\beta_{0,0} = \dim M_{0}$ and $\beta_{1,1} = \dim M_{1}-3\beta_{0,0}$ are determined by the dimension vector $D$, as are $\beta_{3,4}$ and $\beta_{2,3}$ and the difference $\beta_{1,2}-\beta_{2,2}$. 

However, the value of $\beta_{2,2}$ is a more subtle invariant, semicontinuous on the family of equivalence classes of representations. In \S\ref{sec:gensBmod} we determine the semigroup of Betti diagrams $\beta(M)$ that come from representations of $Q$.

\smallskip\noindent{\bf A monotonicity principle and the proof of Theorem~\ref{thm:puresexist}.}  In order to prove Theorem~\ref{thm:puresexist}, we must construct an appropriate submodule of $M$ based only on the information contained in the Betti diagram of $M$.  Our construction is based on the notion of a \defi{numerical subcomplex}.

\begin{defn}\label{defn:numerical subcomplex}
A \defi{numerical subcomplex} of a minimal free resolution $F^{M}$ is a subcomplex $G$ ``whose existence is evident from the Betti diagram $\beta(F^{M})$'' in the sense that there is a sequence of integers $\alpha_{i}$ such that each
 $G_{i}$ consists of all the summands of $F^{M}_{i}$ generated in degrees $< \alpha_{i}$, and each $F^{M}_{i}/G_{i}$ is generated in degrees $> \alpha_{i+1}$. 
\end{defn}
For instance, in the example in \eqref{eqn:5D}, the linear strand
\[
S^{10}\gets S^{15}(-1)\gets S^{10}(-2)\gets 0.
\]
of $F^M$ is a numerical subcomplex of $F^M$ determined by $\alpha = (1,2,3,4)$.

For the proof of Theorem~\ref{thm:puresexist}, we use a numerical subcomplex $F^M$ to construct a submodule $M'\subseteq M$, where $\beta(M')=c_0\pi_{d^0}$.  Defining the appropriate numerical subcomplex and the submodule $M'$ will be relatively straightforward. However, since numerical complexes generally fail to be exact, it is not a priori clear that we should be able to determine the Betti diagram of the submodule $M'$.  This computation relies on a \defi{monotonicity principle} about the Betti numbers of pure diagrams.  

\begin{theorem}[Monotonicity principle]\label{thm:monotonicity}
Suppose that $d,e$ are degree sequences with $d_{i} = e_{i}$ and $d_{i+1}=e_{i+1}$. If $d<e$ then 
$$
\frac{\beta_{i,d_i}(\pi_{d})}{\beta_{i+1,d_{i+1}}(\pi_{d})} < \frac{\beta_{i,e_i}(\pi_{e})}{\beta_{i+1,e_{i+1}}(\pi_{e})}
$$
\end{theorem}
This theorem turns out to be surprisingly powerful, and we apply it to compute the Betti diagram of our submodule $M'\subseteq M$.  This Monotonicity Principle is related to some of the numerical inequalities for pure diagrams from \cite[Lemma~4.1]{mccullough} and \cite[\S3]{erman-beh}.

This paper is organized as follows.  In the next section we provide the necessary background on \BS theory. In \S\ref{sec:North fork}--\S\ref{sec:obtaining}, we develop our technique for producing cleanly embedded submodules.  We then discuss some limitations and extensions of our main result in  \S\ref{sec:beyond}.  The last two sections are devoted to the applications described above.

%%%%%%%%%%%%%%%%%%%%%%%%%%%%%%%%
%%%%%%%%%%%%%%%%%%%%%%%%%%%%%%%%
\section{Notation and Background on Boij-S\"oderberg Theory}\label{sec:background}
%%%%%%%%%%%%%%%%%%%%%%%%%%%%%%%%
%%%%%%%%%%%%%%%%%%%%%%%%%%%%%%%%
Throughout, all modules are assumed to be finitely generated, graded $S$-modules.  We use $(F^M,\phi^M)$ to refer to the minimal resolution of a module $M$, though we may omit the upper index $M$ in cases where confusion is unlikely.  

\begin{defn}\label{defn:Ff numerical complex}
Fix a module $M$, a minimal free resolution $(F^M, \phi^M)$ of $M$, and a sequence of integers $\mathbf{f}=(f_0,\dots,f_n)\in \ZZ^{n+1}$.  We define $(F(\mathbf{f})^M,\phi(\mathbf{f})^M)$ to be a sequence of free modules and maps
%$$
%\cdots \rTo F(\mathbf{f})^M_i \rTo^{\phi(\mathbf{f})^M}F(\mathbf{f})^M_i \rTo\cdots
%$$
\[
\xymatrix{
\cdots \ar[r]& F(\mathbf{f})^M_i \ar[r]^-{\phi(\mathbf{f})^M}&F(\mathbf{f})^M_{i-1} \ar[r]&\cdots
}
\]
 as follows. 
Let $\iota_i: F(\mathbf{f})^M_i\to F_i^M$ be the inclusion of the graded free submodule consisting of all free summands of $F^M_i$ generated in degrees $< f_i$, and let  $\pi_i: F_i^M\to F(\mathbf{f})^M_i$ be a splitting of $\iota_{i}$ whose kernel consists of  
free summands generated in degrees $\geq f_{i}$. Finally, set
\[
\phi(\mathbf{f})^M_i =  \pi_{i-1}\circ \phi^M_i \circ \iota_i\colon  F(\mathbf{f})^M_i\to  F(\mathbf{f})^M_{i-1}.
\]
\end{defn}
\noindent Note that $F(\mathbf{f})^M$ is not necessarily a complex (see Example~\ref{ex:not complex}).

\begin{example}
Let
\[
\beta(F^{M})=
\begin{pmatrix}
{12}&{26}&{16}&-\\
-&-&-&{1}\\
-&5&-&{1}\\
-&-&12&17
\end{pmatrix}.
\]
Then  $F((1,3,5,6))^{M}$ is a numerical subcomplex  with Betti diagram
\[
\beta(F(1,3,5,6)^{M})=
\begin{pmatrix}
{12}&{26}&{16}&-\\
-&-&-&{1}\\
-&-&-&{1}\\
-&-&-&-
\end{pmatrix}.
\]
This is the largest numerical subcomplex containing only the linear first syzygies.
\end{example}
\begin{example}\label{ex:not complex}
For $S=k[x,y,z]$, let $M=S/(x,y,z^2)$.  Then
\[
\beta(M)=\begin{pmatrix}
1&2&1&-\\
-&1&2&1
\end{pmatrix}.
\]
We have 
\[ \beta(F(1,2,4,5)^{M})=\begin{pmatrix}
1&2&1&-\\
-&-&2&1
\end{pmatrix},
\]
but $F(1,2,4,5)^{M}$ is not a complex since
\[
\phi(1,2,4,5)^M_1=\begin{pmatrix} x&y \end{pmatrix} \quad \text{ and }\quad \phi(1,2,4,5)^M_2=\begin{pmatrix} 0&z^2&-y\\-z^2&0&x \end{pmatrix}
\]
do not compose to $0$.
\end{example}

We think of a Betti diagram $\beta(M)$ as an element of the infinite dimensional $\mathbb Q$-vector space $\mathbb V:=\oplus_{i=0}^n \oplus_{j\in \mathbb Z} \mathbb Q$.  The \defi{semigroup of Betti diagrams} $\Bmod$ is the subsemigroup of $\mathbb V$ generated by $\beta(M)$ for all modules $M$.  We define the \defi{cone of Betti diagrams} $\BQ$ as the positive cone spanned by $\Bmod$ in $\mathbb V$, and we define $\Bint$ as the semigroup of lattice points in $\BQ$.  See \cite{erman-semigroup} for comparisons between $\Bint$ and $\Bmod$.

Boij-S\"oderberg theory describes the cone $\BQ$.\footnote{There 
is  also a``dual'' side of the theory that describes the cone of cohomology diagrams of vector bundles and coherent sheaves on $\mathbb P^n$; see~\cite{eis-schrey1, eis-schrey2}.}
As conjectured in \cite{boij-sod1} and proven in~\cite{eis-schrey1, boij-sod2}, the extremal rays of $\BQ$ are spanned by
\defi{pure diagrams} $\pi_{d}$ (as defined above in \eqref{eqn:pid}) where $d = (d_{0},\dots,d_{n})\in (\ZZ\cup \{+\infty\})^{n+1}$ is a degree sequence, i.e. $d_i+1 \le d_{i+1}$.
We will also use the notation $\widetilde{\pi}_d$ for the smallest integral point on the ray spanned by $\pi_d$. So $\widetilde{\pi}_d=m \pi_d$
with $m=lcm(\prod_{k \not=i,k\le c} |d_i-d_k| , i=0,\ldots, t)$ where $t= \max\{i \mid d_i < \infty\}$ is the length of the degree sequence.

The cone $\BQ$ has the structure of a simplicial fan: if we partially order the sequences $d$ termwise, then there is a unique decomposition of any $\beta(M)\in \BQ$ as
\begin{equation}\label{eqn:BSdecomp}
\beta(M)=\sum_{i=0}^s c_{i}\pi_{d^i}
\end{equation}
with $c_{i}\in \mathbb Q_{\geq 0}$ and $d^{0}  <\cdots< d^{s}$.
We refer to this as the \defi{Boij-S\"oderberg decomposition} of $\beta(M)$.   For an expository account of Boij-S\"oderberg theory, see one of~\cite{eis-schrey-icm,floystad}.

If $\Delta=(d^0, \dots, d^s)$ is a chain of degree sequences $d^0<d^1<\dots<d^s$, then we use the notation $\BQ(\Delta), \Bint(\Delta)$ and $\Bmod(\Delta)$ for the restrictions of $\BQ, \Bint,$ and $\Bmod$ to the simplicial cone generated by the pure diagrams whose degree sequences lie in $\Delta$.   When $D\in \BQ(\Delta)$ with $\Delta=(d^0, \dots, d^s)$, the \defi{top strand} of $D$ consists of the entries parametrized by $d^0$, namely $\left(\beta_{0,d^0}(D), \beta_{1,d^0_1}(D), \dots, \beta_{n,d^0_n}(D)\right)$.  We refer to $c_{0}\pi_{d^0}$ as the first step of the Boij-S\"oderberg decomposition, and so on.  We will repeatedly use the fact that the algorithm for decomposing any such $D$ proceeds as a greedy algorithm on the top strand of $D\in \BQ$.  See~\cite[\S1]{eis-schrey1} for details.
 `
\begin{defn}\label{defn:pure filtration}
A \defi{full clean filtration} of a finitely generated graded $S$-module $M$ is
a sequence of cleanly embedded submodules
\[
M=M_0\supsetneq M_1\supsetneq M_2\supsetneq \dots \supsetneq M_t=0
\]
such that each $M_i/M_{i+1}$ has a pure resolution.
\end{defn}

It is immediate that we can put together full clean filtrations in extensions:
\begin{lemma}\label{lem:pures extensions}
Let $M'\subseteq M$ be a cleanly embedded submodule, and let $M''=M/M'$.  If $M'$ and $M''$ admit full clean filtrations, then so does $M$.\qed
\end{lemma}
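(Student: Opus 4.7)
The plan is to concatenate the given filtrations: lift the filtration of $M''$ to one of $M$, then continue along the filtration of $M'$ at the bottom. Let $\pi\colon M\to M''$ be the projection and write the given full clean filtrations as
$$M'' = M''_0 \supsetneq \cdots \supsetneq M''_t = 0 \qquad \text{and} \qquad M' = M'_0 \supsetneq \cdots \supsetneq M'_s = 0.$$
Setting $N_i := \pi^{-1}(M''_i)$, one gets the chain
$$M = N_0 \supsetneq N_1 \supsetneq \cdots \supsetneq N_t = M'_0 \supsetneq M'_1 \supsetneq \cdots \supsetneq M'_s = 0,$$
whose successive quotients are $N_i/N_{i+1}\cong M''_i/M''_{i+1}$ and $M'_j/M'_{j+1}$; these are pure by hypothesis, so the only thing to verify is that each $N_i$ and each $M'_j$ is cleanly embedded in $M$.

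The main tool I would record first is a convenient reformulation: for a short exact sequence $0\to A\to B\to C\to 0$, the inclusion $A\subset B$ is cleanly embedded if and only if $\beta(B)=\beta(A)+\beta(C)$. Both conditions are equivalent to the vanishing of every connecting map $\Tor_{i+1}^S(C,k)\to \Tor_i^S(A,k)$ in the long exact Tor sequence. Two immediate consequences are that clean embedding is transitive (a composition of Tor-injections is injective), and therefore each tail submodule $M'_j\subset M$ is cleanly embedded via the factorization $M'_j\subset M'\subset M$.

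It remains to handle the $N_i$. Applying the mapping-cone bound to the sequences $0\to M'\to N_i\to M''_i\to 0$ and $0\to N_i\to M\to M''/M''_i\to 0$ yields
$$\beta(N_i)\le \beta(M')+\beta(M''_i),\qquad \beta(M)\le \beta(N_i)+\beta(M''/M''_i),$$
while the clean embeddings $M'\subset M$ and $M''_i\subset M''$ give the additivities $\beta(M)=\beta(M')+\beta(M'')$ and $\beta(M'')=\beta(M''_i)+\beta(M''/M''_i)$. Chaining the two inequalities against these two equalities sandwiches everything, so both inequalities are in fact equalities; in particular $\beta(M)=\beta(N_i)+\beta(M''/M''_i)$, which is exactly clean embedding of $N_i$ in $M$. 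The whole argument is essentially formal once the ``Betti numbers add'' reformulation is in hand, so there is no real obstacle beyond the bookkeeping of invoking the four (in)equalities simultaneously.
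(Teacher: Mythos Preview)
Your proof is correct. The paper itself gives no argument at all---the lemma carries a \qed\ immediately after its statement, preceded only by the remark that ``it is immediate that we can put together full clean filtrations in extensions.'' Your write-up supplies exactly the details behind that remark: concatenate the two filtrations via $\pi^{-1}$, use transitivity of Tor-injectivity for the tail $M'_j\subset M'\subset M$, and for the lifted pieces $N_i$ sandwich the two mapping-cone inequalities against the two additivity identities to force $\beta(M)=\beta(N_i)+\beta(M''/M''_i)$. The reformulation ``$A\subset B$ is clean iff $\beta(B)=\beta(A)+\beta(C)$'' is the natural bridge and is justified by the computation $\beta_{i,j}(B)=\beta_{i,j}(A)+\beta_{i,j}(C)-\dim\operatorname{im}(\delta_{i+1})_j-\dim\operatorname{im}(\delta_i)_j$ coming from the long exact Tor sequence, so equality everywhere forces all connecting maps to vanish. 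There is nothing to compare against in the paper; your argument is the expected one.
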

%\begin{proof}
%Omitted.
%\end{proof}

Many numerical invariants of $M$ may be computed in terms of the Betti diagram of $M$, including the projective dimension of $M$, the depth of $M$, the Hilbert polynomial of $M$, and more.  We extend all such numerical notions to arbitrary diagrams $D\in \mathbb V$.  For instance, we say that the diagram
\[
D=\begin{pmatrix}
1&\frac{8}{3}&2&-\\-&-&-&\frac{1}{3}
\end{pmatrix}
\]
has projective dimension $3$.

When $M$ has finite length, we use the notation $M^\vee$ for the graded dual module $\Hom(M,k)$.

%%%%%%%%%%%%%%%%%%%%%%%%%%%
%%%%%%%%%%%%%%%%%%%%%%%%%%%
\section{The North fork of $F^M$}\label{sec:North fork}
%%%%%%%%%%%%%%%%%%%%%%%%%%%
%%%%%%%%%%%%%%%%%%%%%%%%%%%
We begin the construction of cleanly embedded submodules by studying the maximal numerical subcomplex of $F^M$ that contains only the first syzygies of minimal degree.  For instance, let $M$ be any module such that 
\begin{equation}\label{eqn:101510}
\beta(M)=\begin{pmatrix}
10 & 15 & 10 & - \\
- & 15 &15 &-\\
- & 10&15&10
\end{pmatrix}.
\end{equation}
$M$ is generated entirely in degree $0$, and $M$ has some linear first syzygies.  In this case, the maximal numerical subcomplex of $F^M$ containing these linear first syzygies is the linear strand of $F^M$, which corresponds to $F(\mathbf{f})^M$ where $\mathbf{f}=(1,2,3,5)$:
\[
\NF^M: S^{10}\gets S(-1)^{15}\gets S(-2)^{10}\gets 0.
\]

This type of numerical subcomplex plays an important role for us, and we refer to it as the \defi{North fork of $F^M$}.  This name is meant to suggest that $\NF^M$ consists of the part of the complex that ``flows through'' the minimal degree first syzygies.  The following definition states this more formally.

\begin{defn}\label{defn:North forkresolutions}
The \defi{North fork of $F^M$} is $(\NF^M, \phi(\mathbf{f})^M)$, where $\mathbf{f}$ is defined as follows: Let $f_0$ be one more than the maximal degree of a generator of $M$ and let $f_1$ be one more than the minimal degree of a first syzygy of $M$.  For $i>1$, set
\begin{equation}\label{eqn:fi}
f_i:=\min\{j | j>f_{i-1},\text{ and } \beta_{i,j}(M)\ne 0\}.
\end{equation}
\end{defn}
Note that $f_i>f_{i-1}$ with the possible exception that $f_1$ might be smaller than or equal to $f_0$.  Allowing $f_1\leq f_0$ slightly streamlines our argument in the case of a module generated in multiple degrees.  Namely,
since all generators of $F_0$ have degree $<f_0$, it follows that $\phi^M_1$ has the block form $\phi^M_1=\begin{pmatrix} \phi(\mathbf{f})^M_0&b_1^M\end{pmatrix}$.
\begin{lemma}
The North fork of $F^M$ is a complex.  
\end{lemma}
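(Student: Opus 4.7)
The plan is to prove the stronger statement that $\NF^M$ is an honest subcomplex of $F^M$; the lemma follows, since a subcomplex of a complex is a complex. Concretely, I would show that for every $i \geq 1$ the restriction of $\phi^M_i$ to the summand $\NF^M_i \subseteq F^M_i$ already lands inside $\NF^M_{i-1} \subseteq F^M_{i-1}$. Once this is established, the projection $\pi_{i-1}$ in the definition $\phi(\mathbf{f})^M_i = \pi_{i-1} \circ \phi^M_i \circ \iota_i$ acts as the identity on the relevant images, so $\phi(\mathbf{f})^M_i$ restricted to $\NF^M_i$ agrees with $\phi^M_i|_{\NF^M_i}$, and the vanishing of $\phi(\mathbf{f})^M_{i-1} \circ \phi(\mathbf{f})^M_i$ reduces to that of $\phi^M_{i-1} \circ \phi^M_i$.

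The containment $\phi^M_i(\NF^M_i) \subseteq \NF^M_{i-1}$ rests on two degree observations. First, unpacking the recursive definition of $f_i$: for $i \geq 2$ the formula $f_i = \min\{j > f_{i-1} \mid \beta_{i,j}(M) \neq 0\}$ says that $F^M_i$ has no summands of twist strictly between $f_{i-1}$ and $f_i$, so every summand of $\NF^M_i$ has twist at most $f_{i-1}$. Second, because $F^M$ is minimal the entries of $\phi^M_i$ lie in $\mathfrak{m}$, and hence $\phi^M_i$ sends a summand $S(-d) \subseteq F^M_i$ into the span of those summands $S(-e) \subseteq F^M_{i-1}$ with $e \leq d-1$. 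Combining, for $i \geq 2$ a summand $S(-d) \subseteq \NF^M_i$ has $d \leq f_{i-1}$, so its image is supported in summands of twist at most $f_{i-1}-1 < f_{i-1}$, i.e., in $\NF^M_{i-1}$; the boundary case $i = 1$ is even easier, since the image lies in $F^M_0$, which equals $\NF^M_0$ because all generator twists are bounded by $f_0 - 1$.

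There is no genuine obstacle: the argument is degree-bookkeeping that hinges on minimality of $F^M$ together with the precise choice of the jump locations $f_i$. The necessity of that choice is illustrated by Example~\ref{ex:not complex}, where a looser sequence $\mathbf{f}$ causes $F(\mathbf{f})^M$ to fail to be a complex.
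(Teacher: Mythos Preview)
Your proof is correct and follows essentially the same approach as the paper: both arguments show that $\phi^M_i$ restricted to $\NF^M_i$ lands inside $\NF^M_{i-1}$ (the paper phrases this as the block upper-triangular form \eqref{eqn:blocktriangle}), from which the subcomplex property is immediate. You spell out the degree bookkeeping and the role of minimality more explicitly than the paper does, but the underlying idea is identical.
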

\begin{proof}
By splitting the inclusions $\NF^M_i\to F^M_i$, we may decompose each $\phi^M_i$ as $\phi^M_i=\begin{pmatrix} a^M_i & b^M_i\end{pmatrix}$, where the source $a^M_i$ is $\NF^M_i$.  Since $\NF^M_i$ consists of all summands generated in 
degree $< f_{i}$, the image of $a^M_i$ does not depend on the choice of basis for $F_i$.

From the definition of the $f_i$ it follows that $a^M_i$ factors through the inclusion $\NF_{i-1}^M\to F^M_{i-1}$.  As in Definition~\ref{defn:numerical subcomplex}, we use $\phi(\mathbf{f})^M_i$ to denote the induced map $\phi(\mathbf{f})^M_i: \NF^M_i\to \NF^M_{i-1}$.  We may thus rewrite $\phi^M_i$ as a block upper triangular matrix:
\begin{equation}\label{eqn:blocktriangle}
\phi^M_i=
\bordermatrix{
&\deg <f_i&\deg \geq f_i\cr
\deg < f_{i-1}&\phi(\mathbf{f})^M_i&*\cr
\deg \geq f_{i-1}&0&*
}
\end{equation}
for all $i>1$.  It follows immediately that $(\NF^M, \phi(\mathbf{f})^M)$ is actually a complex. 
\end{proof}

\begin{example}\label{ex:more101510}
Let $M$ be as in \eqref{eqn:101510}. The Betti diagram of $N:=\coker(\phi(\mathbf{f})^M_1)$ has the form
\[
\beta(N)=\begin{pmatrix}
10&15&10&-\\
-&-&*&-\\
-&-&*&*\\
-&-&\vdots & \vdots
\end{pmatrix},
\]
where $*$ indicates and unknown entry.  The $\beta_{3,3}$ and $\beta_{3,4}$ entries of $\beta(N)$ are zero because any low-degree third syzygy of $N$ would lift to a third syzygy of $M$ (see Lemma~A.2 for more details).
\end{example}

In Section~\ref{sec:obtaining} we shall show that, under the hypotheses of Theorem~\ref{thm:puresexist}, the cleanly embedded submodule of $M$ whose
existence is asserted by the theorem is the module $H^0_{\mathfrak m}(N)$, where $N:=\coker(\phi(\mathbf{f})^M_1)$. 
The following lemma relates the resolution of a module defined via the North fork or a similar construction, like the module $\coker(\phi(\mathbf{f})^M_1)$, to the resolution of the original module $M$.  (The unusual numbering of the following lemma matches the published version of this paper, where this lemma appeared in a separate corrigendum.)
\begin{lemmaA2}\label{lem:degs}
Let $M$ be a module and let $\mathbf{f}=(f_0,\dots,f_n)\in \ZZ^{n+1}$ be any sequence such that $f_0$ is greater than the maximal degree of a generator of $M$, and such that
\[
f_2:=\min\{j | j>f_1 \text{ and } \beta_{2,j}(M)\ne 0\}.
\]
Let $N$ be the cokernel of $\phi(\mathbf{f})^M$, as defined in Definition~\ref{defn:Ff numerical complex}, and let $K$ be the kernel of the natural surjection $N\to M$.
\begin{enumerate}
	\item\label{lem:degs:i}  $\Tor_i(K,k)$ is generated in degree $\geq f_2+i-1$.
	\item\label{lem:degs:ii}  If $e\leq f_2+i-2$ then we get an injection:
$
\Tor_i(N,k)_e\to \Tor_i(M,k)_e
$
	\item\label{lem:degs:iii}  If $e<f_2+i-2$ then we also get a surjection:
$
\Tor_i(N,k)_e\to \Tor_i(M,k)_e
$
\end{enumerate}
\end{lemmaA2}
\begin{proof}
By definition of $N$, we have that $\Tor_1(N,k)\subseteq \Tor_1(M,k)$, so the long exact sequence in Tor induces:
\[
\dots \to \Tor_2(M,k)\to \Tor_1(K,k)\overset{0}{\to} \Tor_1(N,k)\to \dots
\]
Let $m$ be the minimal degree of a generator of $\Tor_1(K,k)$.  Since $K$ is generated in degree $\geq f_1$ (see Definition~\ref{defn:Ff numerical complex}), it follows that $m>f_1$; the surjectivity of $\Tor_2(M,k)\to \Tor_1(K,k)$ then implies that $\beta_{2,m}(M)\ne 0$.  Hence $m\geq f_2$ and \eqref{lem:degs:i} follows immediately.

For  \eqref{lem:degs:ii} and  \eqref{lem:degs:iii}, we consider:
\[
\cdots \to \Tor_i(K,k)_e \to \Tor_i(N,k)_e \to \Tor_i(M,k)_e\to \Tor_{i-1}(K,k)_e\to \cdots
\]
If $e\leq f_2+i-2$, then \eqref{lem:degs:i} implies that $\Tor_i(K,k)_e=0$, which proves \eqref{lem:degs:ii}.  If $e< f_2+i-2$, then \eqref{lem:degs:i} implies $\Tor_{i-1}(K,k)_e=0$, which yields \eqref{lem:degs:iii}.
\end{proof}

%%%%%%%%%%%%%%%
%%%%%%%%%%%%%%%
\section{The Monotonicity Principle and its application}\label{sec:determining}
%%%%%%%%%%%%%%%
%%%%%%%%%%%%%%%
 \begin{proof}[Proof of Theorem~\ref{thm:monotonicity}]
If $d$ and $e$ have the same length as degree sequences, then, by inserting a maximal chain of degree sequences between $d$ and $e$, we see that it is enough to treat the case
where $d_{k} = e_{k}$ for all but one value of $k$, which cannot be equal to $i$ or to $i+1$, and where $e_k=d_k+1$.
In view of the Herzog-K\"uhl equations \eqref{eqn:pid}, the desired inequality is 
$$
\frac {|d_{k}-d_{i+1}|} {|d_{k}-d_{i}|} < \frac {|1+d_{k}-d_{i+1}|} {|1+d_{k}-d_{i}|}.
$$
If $k>i+1$ then $0<d_{k}-d_{i+1} < d_{k}-d_{i}$, so the result has the form
$$
\frac{a}{b} <\frac{a+1}{b+1} 
$$
where $0<a<b$, and this is immediate. In the case $k<i$, on the other hand,  we have $d_{i+1} - d_{k} > d_{i}-d_{k}> d_i-d_k-1>0$, so the result has the form
$$
\frac{a}{b} <\frac{a-1}{b-1} 
$$
with $a>b>1$, and again this is immediate.

If $d$ and $e$ have different lengths as degree sequences, then we can immediately reduce to the case $d=(d_0,\dots, d_t)\in \ZZ^{t+1}$ and $e=(d_0,\dots,d_{t-1},\infty)\in (\ZZ\cup \{\infty\})^{t+1}$.  In this case, we set $d^\ell:=(d_0,d_1,\dots,d_{t-1},d_t+\ell)$ for all $\ell\in \mathbb N$.  A direct computation via \eqref{eqn:pid} yields:
\[
\pi_e=\lim_{\ell \to \infty}\ell \cdot  \pi_{d^{\ell}}.
\]
Since all of the degree sequences $d^\ell$ have length $t$, we conclude that
\[
\frac{\beta_{i,d_i}(\pi_{d})}{\beta_{i,d_{i+1}}(\pi_{d})} <\frac{\beta_{i,d_i}(\pi_{d^1})}{\beta_{i,d_{i+1}}(\pi_{d^1})} 
%<\frac{\beta_{i,d_i}(\pi_{d^2})}{\beta_{i,d_{i+1}}(\pi_{d^2})} 
<\dots<\frac{\beta_{i,d_i}(\pi_{d^\ell})}{\beta_{i,d_{i+1}}(\pi_{d^\ell})} <\frac{\beta_{i,d_i}(\pi_{d^{\ell+1}})}{\beta_{i,d_{i+1}}(\pi_{d^{\ell+1}})} 
<\dots< \frac{\beta_{i,e_i}(\pi_{e})}{\beta_{i,e_{i+1}}(\pi_{e})}.
\]
\end{proof}

The next example shows how the Monotonicity Principle can be used to determine Betti diagrams.
\begin{example}\label{ex:BSbetaNorthFork}
Consider $M$ and $N$ as in Example~\ref{ex:more101510}.  Recall that the Betti diagram of $N$ has the form
\begin{equation}\label{eqn:Dpartial}
\beta(N)=\begin{pmatrix}
10 & 15 & 10 & - \\
- & - &* &-\\
- & -&*&*\\
-&-&\vdots &\vdots 
\end{pmatrix}.
\end{equation}
Can one determine the remaining entries of the above Betti diagram from the given information? 

Since we know that $\NF^M$ is a numerical subcomplex of the minimal free resolution of $N$, we at least know something about the top strand of $\beta(N)$.  One can thus attempt to compute the first Boij-S\"oderberg summand of $\beta(N)$. With the Monotonicity Principle this approach leads to a complete determination of $\beta(N)$ as follows.

If $\pi_d$ is a diagram that could appear in the Boij-S\"oderberg decomposition of $\beta(N)$ and which contributes to either the $\beta_{1,1}$ or $\beta_{2,2}$ entry, then $d$ must have the form $(0,1,d_2,d_3)$ with $2\leq d_2$ and $5\leq d_3$.  The minimal such $d$ is $d=(0,1,2,5)$, and by applying the formula \eqref{eqn:pid}, we see
\[
\frac{\beta_{1,1}(\pi_{(0,1,2,5)})}{\beta_{2,2}(\pi_{(0,1,2,5)})}= \frac{15}{10}.
\]
Note that this equals the ratio $\frac{\beta_{1,1}(N)}{\beta_{2,2}(N)}$.  

Now, the Monotonicity Principle implies that if $e=(0,1,2,d_3)$ with $d_3>5$ then \linebreak $\beta_{1,1}(\pi_{e})/\beta_{2,2}(\pi_{e})> 15/10.$  If we allow $e$ to have the form $e=(0,1,d_2,d_3)$ with $d_2>2$, then $\pi_e$ does not have any $\beta_{2,2}$ entry, and so the ratio would be $\infty$.  We conclude that every pure diagram $\pi_d$ which could conceivably contribute to $\beta_{1,1}(N)$ satisfies $\frac{\beta_{1,1}(\pi_{d})}{\beta_{2,2}(\pi_{d})}\geq \frac{15}{10},$ with equality if and only if $d=(0,1,2,5)$.

Since the decomposition algorithm implies that we cannot eliminate $\beta_{1,1}$ before we eliminate $\beta_{2,2}$, it follows that we must eliminate both entries simultaneously.  Thus, the first step of the \BS decomposition of $\beta(N)$ is given by $1\cdot \widetilde{\pi}_{d^0}=1\cdot \widetilde{\pi}_{(0,1,2,5)}$.  

Continuing to apply the decomposition algorithm, we next consider the diagram $\beta(N)-1\cdot \widetilde\pi_{d^0}$, which has the form
\[
\beta(N)-1\cdot \widetilde\pi_{d^0}=\begin{pmatrix}
4 & - & - & - \\
- & - &* &-\\
- & -&*&*\\
-&-&\vdots &\vdots 
\end{pmatrix}.
\]
Since the second column consists of all zeroes, this diagram must be $4 \pi_{(0)}$.  Hence,
\[
\beta(N)=\widetilde\pi_{(0,1,2,5)}+4\widetilde\pi_{(0)}=\begin{pmatrix}
10 & 15 & 10 & - \\
- & - &- &-\\
- & -&-&1\\ 
\end{pmatrix}.
\]
\qed
\end{example}

We will generally apply the monotonicity principle via the following corollary.  However, as illustrated by Example~\ref{ex:branching} and by the computations in \S\ref{sec:gensBmod}, the principle can be useful in more general situations.

\begin{cor}\label{cor:0free}
Let $M$ be a module satisfying the hypotheses of Theorem~\ref{thm:puresexist}\eqref{thm:puresexist:1}, and let $\NF^M$ be the North fork of $F^M$.  Set $N:=\coker(\phi(\mathbf{f})^M_1)$.  We may write
\[
\beta(N)=c_0\pi_{d^0}+D_{\text{free}}
\]
where $D_{\text{free}}$ is the Betti diagram of a free module.
\end{cor}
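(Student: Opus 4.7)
The plan is to compute the Boij-S\"oderberg decomposition of $\beta(N)$ in three stages: read off columns $0$ and $1$ directly from $\NF^M$, use a Tor long exact sequence to pin down $\beta_{2}(N)$ in the relevant range, and then invoke the Monotonicity Principle (Theorem~\ref{lem:monotonicity}) to show that the only non-free pure diagram in the decomposition is $c_0\pi_{d^0}$.

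For the first stage, note that $f_0$ exceeds every generator degree of $M$, so $\NF^M_0=F^M_0$, and that $\phi(\mathbf{f})^M_1$ is a restriction of the minimal map $\phi^M_1$ and thus has entries in $\mathfrak{m}$. Hence $\phi(\mathbf{f})^M_1$ is a minimal presentation of $N$, giving $\beta_0(N)=\beta_0(M)$ and $\beta_{1,j}(N)=0$ for $j\neq d^0_1$ with $\beta_{1,d^0_1}(N)=\beta_{1,d^0_1}(M)$. The hypothesis $d^0\ll d^1$ forces $d^i_1\ge d^1_1\ge d^0_2>d^0_1$ for every $i\ge 1$, so $\pi_{d^0}$ is the only pure summand of $\beta(M)$ contributing in column $1$ at degree $d^0_1$, yielding $\beta_{1,d^0_1}(N)=c_0\pi_{d^0}(1,d^0_1)$.

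For the second stage, $N$ fits into a short exact sequence $0\to K\to N\to M\to 0$ whose kernel $K=\im\phi^M_1/\im\phi(\mathbf{f})^M_1$ is a quotient of $G^M_1:=F^M_1/\NF^M_1$. The summands of $G^M_1$ lie in the degrees $d^i_1$ for $i\ge 1$, all of which are $\ge d^1_1\ge d^0_2$, so $K$ is generated in degrees $\ge d^0_2$ and consequently $\beta_{i,j}(K)=0$ for all $j<d^0_2+i$. Combined with the vanishings $\beta_{i,j}(M)=0$ for $j<d^0_i$, the Tor long exact sequence yields $\beta_{i,j}(N)=0$ for $j<d^0_i$, $i\ge 2$; and in bidegree $(2,d^0_2)$ it collapses to the isomorphism $\beta_{2,d^0_2}(N)\cong\beta_{2,d^0_2}(M)$. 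Isolating the $\pi_{d^0}$ contribution to this entry (again via $d^0\ll d^1$, which forces $d^i_2>d^0_2$ for all $i\ge 1$) gives $\beta_{2,d^0_2}(N)=c_0\pi_{d^0}(2,d^0_2)$.

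For the third stage, write $\beta(N)=\sum_j c'_j\pi_{e^j}$ for the Boij-S\"oderberg decomposition. Since $\beta_{1,\ast}(N)$ is supported at $d^0_1$, any $e^j$ with $e^j_1<\infty$ must satisfy $e^j_1=d^0_1$; and the vanishing from Stage~2 forces $e^j_2\ge d^0_2$ for every $j$. For such $e^j$, either $e^j_2>d^0_2$, in which case $\pi_{e^j}(2,d^0_2)=0$, or $e^j_2=d^0_2$, in which case Monotonicity with $i=1$ (fixing coordinates $1$ and $2$) gives $\pi_{e^j}(1,d^0_1)/\pi_{e^j}(2,d^0_2)\ge\pi_{d^0}(1,d^0_1)/\pi_{d^0}(2,d^0_2)$, with equality iff $e^j=d^0$. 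Substituting into the identities $\beta_{1,d^0_1}(N)=c_0\pi_{d^0}(1,d^0_1)$ and $\beta_{2,d^0_2}(N)=c_0\pi_{d^0}(2,d^0_2)$ forces equality throughout, so $\pi_{d^0}$ is the unique non-free pure diagram in the decomposition and appears with coefficient $c_0$; the remaining $e^j$ have $e^j_1=\infty$ and assemble into $D_{\text{free}}$. The main obstacle, I expect, will be Stage~2: both the generator-degree bound on $K$ and the precise Tor vanishings depend critically on the hypothesis $d^0\ll d^1$ and must be tracked carefully, after which Stage~3 simply executes the template of Example~\ref{ex:BSbetaNorthFork}.
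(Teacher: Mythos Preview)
Your argument follows the paper's approach closely: the one methodological difference is Stage~2, where you obtain $\beta_{2,d^0_2}(N)=\beta_{2,d^0_2}(M)$ from the Tor long exact sequence of $0\to K\to N\to M\to 0$, whereas the paper argues directly, via a commutative square, that any degree-$d^0_2$ syzygy of $\phi^M_1$ must factor through the inclusion $\NF^M_1\hookrightarrow F^M_1$ (since the complementary summand $G_1$ of $F^M_1$ is generated in degrees $\ge d^1_1\ge d^0_2$). Both routes feed into the identical Monotonicity/convexity endgame, and your convexity step is spelled out a bit more explicitly than the paper's.

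One caution on Stage~2: your assertion that the Tor sequence gives $\beta_{i,j}(N)=0$ for all $j<d^0_i$ and $i\ge2$ is stronger than what that sequence immediately yields. From $\beta_{i,j}(K)=0$ for $j<d^0_2+i$ together with $\beta_{i,j}(M)=0$ for $j<d^0_i$ one only obtains vanishing for $j<\min(d^0_i,\,d^0_2+i)$, which falls short of $d^0_i$ whenever $d^0_i-d^0_2>i$ (for instance $d^0=(0,1,2,100)$ at $i=3$). You rely on the full vanishing implicitly when invoking Monotonicity, since that is what guarantees $e^j\ge d^0$. The paper makes the parallel assertion ``$e\ge d^0$'' with equal brevity, so your proof is no less complete than the original---but this is the point in both arguments that most deserves an extra word of justification.
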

\begin{proof}[Proof of Corollary \ref{cor:0free}]
We combine definition~\ref{defn:North forkresolutions} and the fact that $d^0_2\leq d^1_1$ (which is part of the definition $d^0\ll d^1$) to conclude that $f_2=d^1_2$.  Lemma~A.2 then implies that
\[
\beta_{i,d^0_i(M)}=\beta_{i,d^0(N)} \quad \text{ for } i=1,2.
 \]

Next, again since $d^0\ll d^1$, we have $d^0_1<d^0_{2}\leq d^1_1$ and $d^0_{2}\leq d^1_1< d^1_{2}$.  It follows that $\pi_{d^0}$ is the only pure diagram from the \BS decomposition of $\beta(M)$ that contributes to the Betti numbers $\beta_{1,d^0_1}(M)$ and $\beta_{2,d^0_{2}}(M)$.  This implies the second equality of
\begin{equation}\label{eqn:ratio equal}
\frac{\beta_{1,d^0_1}(N)}{\beta_{2,d^0_2}(N)}=\frac{\beta_{1,d^0_1}(M)}{\beta_{2,d^0_2}(M)}=\frac{\beta_{1,d^0_1}(\pi_{d^0})}{\beta_{2,d^0_2}(\pi_{d^0})},
\end{equation}
and the first equality follows from the first paragraph of this proof.

Now, let $e_i$ be the minimal degree of a generator of the $i$th syzygy module of $N$, or $\infty$ if this syzygy module is $0$.  We claim that the degree sequence $e=(e_0,\dots, e_n)$ is at least $d^0$.  Since all first syzygies of $N$ lie in degree $d^0_1$, this would imply that $e_1=d^0_1$ or $e_1=\infty$.  For contradiction, assume that $e_i<d^0_i$ for some $i\geq 2$.  The definition of $d^0\ll d^1$ in \eqref{defn:ll} implies that
\[
e_i<d^0_i\leq d^1_2+i-1=f_2+i-1.
\] 
Since $e_i\leq f_2+i-2$, Lemma~A.2\eqref{lem:degs:ii} implies that $\Tor_i(M,k)_{e_i}$ is nonzero.  This yields a contradiction, since $\Tor_i(M,k)_{e_i}$ is nonzero by definition of $e_i$, and hence $e_i$ cannot be small than $d^0_i$, yielding the claim.

Now, if $e_2=d^0_2$ but $e\ne d^0$, then by Theorem~\ref{thm:monotonicity} combined with \eqref{eqn:ratio equal}, we have that
\[
\frac{\beta_{1,d^0_1}(N)}{\beta_{2,d^0_2}(N)}<\frac{\beta_{1,d^0_1}(\pi_{e})}{\beta_{2,d^0_2}(\pi_{e})}.
\]
If $e_2\ne d^0_2$, then the denominator on the right would be $0$.

By convexity, the only sums $\sum_e a_e\pi_e$ which satisfy \eqref{eqn:ratio equal} are rational linear combinations of $\pi_{d^0}$ and of projective dimension $0$ pure diagrams $\pi_{(e_0,\infty,\dots,\infty)}$.  Finally, since $\beta_{1,d^0_1}(N)=\beta_{1,d^0_1}(M)$, we conclude that the coefficient of $\pi_{d^0}$ in the \BS decomposition of $\beta(N)$  equals $c_0$.  This completes the proof.
\end{proof}

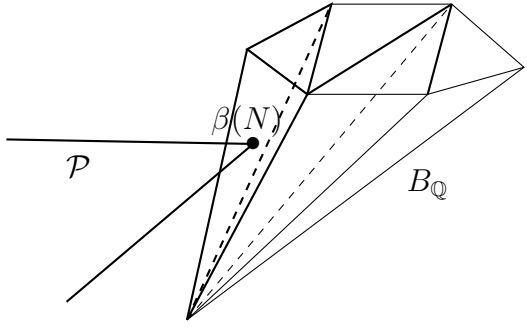
\begin{figure}
\centering
\begin{tikzpicture}[xscale=1.6,yscale=1.2]
%\fill[fill=gray,semitransparent](1,0)--(2,2.5)--(2.2,3.5);
\draw[-,thick] (2,2.5)--(3.2,3.5);
\draw[-,thick] (3,2.5)--(3.2,3.5);
\draw[-,thick](1,0)--(1.5,3);
\draw[-,thick](1,0)--(2,2.5);
\draw[-](1,0)--(3,2.5);
\draw[-](1,0)--(3.8,2.8);
\draw[dashed,-](1,0)--(3.2,3.5);
\draw[dashed,-,thick](1,0)--(2.2,3.5);
\draw[-,thick](1.5,3)--(2.2,3.5)--(2,2.5)--cycle;
\draw[-](1.5,3)--(2,2.5)--(3,2.5)--(3.8,2.8)--(3.2,3.5)--(2.2,3.5)--cycle;
\draw (3,1.5) node {$B_{\mathbb Q}$};
\draw (1.5,2.2) node {$\beta(N)$};
%\draw[->,thick](1.15,2.3)--(1.5,2);
\draw (1.55,1.95) node{$\bullet$};
\draw (.1,1.7) node {$\mathcal P$};
\draw[-,thick](0,.2)--(1.55,1.95)--(-0.5,2);
\end{tikzpicture}
\caption{This figure is a sketch of the situation of Remark~\ref{rmk:polyhedron}.  Our partial information about $\beta(N)$  corresponds to a polyhedron $\mathcal P$.  Since $\mathcal P\cap \BQ$ consists of a single point, this actually determines all of  $\beta(N)$.}
\label{fig:polyhedron}
\end{figure}

\begin{remark}\label{rmk:polyhedron}
The idea behind Example~\ref{ex:BSbetaNorthFork} and Corollary~\ref{cor:0free} may be illustrated by convex geometry.  Our goal is to understand where in the cone $\BQ$ the diagram $\beta(N)$ lies.  As illustrated in \eqref{eqn:Dpartial}, we only have partial knowledge about $\beta(N)$.  We can think of this partial information as cutting out a polyhedron $\mathcal P$ in the vector space $\mathbb V$, and the diagram $\beta(N)$ must lie in the intersection of $\mathcal P$ and $\BQ$.  The computation in Example~\ref{ex:BSbetaNorthFork}  then shows that $\mathcal P\cap \BQ$ consists of a single point (see Figure~\ref{fig:polyhedron}), which is how we determine the remaining entries of $\beta(N)$.
\end{remark}

%%%%%%%%%%%%%%%
%%%%%%%%%%%%%%%
\section{Proof of Theorem~\ref{thm:puresexist} and Corollary~\ref{cor:puresexist-iterated}}\label{sec:obtaining}
%%%%%%%%%%%%%%%
%%%%%%%%%%%%%%%

We begin by showing that under suitable hypotheses the conclusion of Corollary~\ref{cor:0free} implies an actual splitting of $N$:
\begin{lemma}\label{lem:0free}
If $N$ is a module such that
\[
\beta(N)=D_{\geq 2}+D_{\text{free}}
\]
where $D_{\geq 2}$ is a diagram of codimension $\geq 2$ and $D_{\text{free}}$ is a diagram of projective dimension $0$, and such that
$$
\min \{j \mid \beta_{0,j}(D_{\text{free}})\ne 0\} \geq 
\max  \{j \mid \beta_{0,j}(D_{\geq 2})\ne 0\},
$$
then $N$ splits as a direct sum
$
N\cong N_{\geq 2}\oplus N_{\text{free}}
$
 with $\beta(N_{\geq 2})=D_{\geq 2}$ and $\beta(N_{\text{free}})=D_{\text{free}}$.
\end{lemma}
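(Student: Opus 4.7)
The plan is to construct a graded split surjection $N \twoheadrightarrow N_{\text{free}}$, where $N_{\text{free}}$ is a graded free module with Betti diagram $D_{\text{free}}$. Since $N_{\text{free}}$ is projective in the category of graded $S$-modules, such a surjection automatically yields a decomposition $N = \ker \oplus N_{\text{free}}$, and additivity of Betti diagrams then forces $\beta(\ker) = D_{\geq 2}$.

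I plan to argue by induction on the rank of $D_{\text{free}}$, splitting off one copy of $S(-d')$ at a time, where $d' := \min\{j : \beta_{0,j}(D_{\text{free}}) \neq 0\}$. Writing $d := \max\{j : \beta_{0,j}(D_{\geq 2}) \neq 0\}$, the hypothesis reads $d' \geq d$, and after splitting off one copy of $S(-d')$ the residual Betti diagram $D_{\geq 2} + (D_{\text{free}} - \widetilde\pi_{(d',\infty,\ldots,\infty)})$ still has $D_{\geq 2}$ as its codim-$\geq 2$ part, a free remainder, and generator-degree separation, so induction applies.

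To split off a single $S(-d')$, let $F$ denote the minimal free resolution of $N$ and decompose $F_0 = A \oplus B$ with $A = \bigoplus_i S(-d_i^A)$ matching the generator degrees of $D_{\geq 2}$ and $B = \bigoplus_j S(-d_j^B)$ matching those of $D_{\text{free}}$. Write
\[
\phi_1 = \begin{pmatrix}\alpha \\ \psi\end{pmatrix}
\]
with $\alpha: F_1 \to A$ and $\psi: F_1 \to B$. The task reduces to arranging, after a degree-$0$ change of basis on $F_0$, that the row of $\psi$ corresponding to at least one chosen summand $S(-d') \subset B$ vanishes; in that case the corresponding projection $F_0 \twoheadrightarrow S(-d')$ kills $\im\phi_1$ and descends to a split surjection $N \twoheadrightarrow S(-d')$.

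The base input is a degree count: a nonzero entry of $\psi$ at the position of a syzygy of degree $f$ and a $B$-generator of degree $d_j^B \geq d'$ is a polynomial of degree $f - d_j^B$, which by minimality of $F$ must lie in $\mathfrak{m}$; hence $f \geq d' + 1$. So $\psi$ vanishes on all first syzygies of degree $\leq d'$, and $(\im\phi_1)_{d'} \subset (A)_{d'}$. In particular, the generators of $B$ in degree $d'$ survive as minimal generators of $N$ that are free ``at the bottom''.

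The main obstacle is to propagate this bottom-degree vanishing to all higher degrees. Here I plan to exploit the relation $\phi_1 \phi_2 = 0$ together with the codimension-$\geq 2$ hypothesis on $D_{\geq 2}$: the vanishing of the $B$-component of $\phi_1 \phi_2$ expresses entries of $\psi$ in terms of entries of $\alpha$ modulo images of $\phi_2$, which then permits a triangular change of basis $a_i \mapsto a_i + \sum_j s_{ij} b_j$ (with $s_{ij} \in S$ of the appropriate positive degree, allowed precisely because $d_j^B \geq d \geq d_i^A$) that iteratively cancels $\psi$. I also expect to need the Monotonicity Principle (Theorem~\ref{lem:monotonicity}) along the way, in the same spirit as its use in Corollary~\ref{cor:0free}, to rule out pure diagrams that would obstruct the elimination. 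Once the chosen $S(-d')$ summand has been split off, induction completes the proof.
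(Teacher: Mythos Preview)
Your inductive framework is reasonable, but the core step---the ``main obstacle'' paragraph---does not work as written, and the Monotonicity Principle plays no role in this lemma.

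First, the change of basis is pointed the wrong way. For $a_i \mapsto a_i + \sum_j s_{ij} b_j$ to be a homogeneous automorphism of $F_0$ one needs $\deg s_{ij} = d_i^A - d_j^B \leq d - d' \leq 0$, so $s_{ij}$ is forced to be zero (or a scalar in the borderline case $d_i^A = d_j^B$); the inequality $d_j^B \geq d_i^A$ is exactly the obstruction, not the permission. The reverse substitution $b_j \mapsto b_j + \sum_i t_{ji} a_i$ \emph{is} degree-admissible, but a direct calculation shows it leaves the $B$-component $\psi$ of $\phi_1$ untouched and only modifies $\alpha$. Thus no triangular change of basis of the type you describe can cancel a row of $\psi$. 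Second, the relation $\psi\phi_2 = 0$ only says that the columns of $\psi$ satisfy the second syzygies; it does not force any row of $\psi$ into the $S$-row span of $\alpha$, which is what you would actually need. Nowhere in this paragraph is the codimension-$\geq 2$ hypothesis genuinely invoked.

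The paper's argument is short and runs in the opposite direction. One chooses a minimal generator of \emph{maximal} degree $a$: since the Hilbert polynomial read off from $\beta(N)$ shows $N$ has positive rank, such a generator can be taken to be non-torsion, giving a free \emph{submodule} $S(-a)\hookrightarrow N$ rather than a quotient. By induction on the number of generators the cokernel $Q$ decomposes as $G\oplus H$ with $G$ free and $H$ of codimension $\geq 2$. The sequence $0\to S(-a)\to N\to Q\to 0$ then splits because
\[
\Ext^1(Q,S(-a))=\Ext^1(G,S(-a))\oplus\Ext^1(H,S(-a))=0,
\]
the first summand vanishing since $G$ is free and the second because a module of codimension $\geq 2$ over the regular ring $S$ satisfies $\Ext^1(-,S)=0$. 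This $\Ext^1$ vanishing is precisely where the codimension hypothesis enters, and it replaces all of the matrix manipulation you were attempting.
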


Informally, the displayed inequality above says that the minimum degree of a ``generator'' of $D_{\text{free}}$ is at least as large as the maximum degree of a ``generator'' of $D_{\geq 2}$.

\begin{proof}
Let $a:=\max \{ j | \beta_{0,j}(N)\ne 0\}$, the maximal degree of a minimal generator of $N$. Let $K$ be the quotient field of $S$.  By considering the Hilbert polynomial of $N$, we see that $N\otimes_S K$  has rank $\geq 1$, and thus some minimal generator of degree $a$ in
$N$ generates a free submodule. This gives us an exact sequence
\[
0\to S(-a)\to N \to Q\to 0.
\]
The map $S(-a)\to N$ lifts to a map $S(-a)\to F^{N}_{0}$ whose image is a free summand,
so $\beta(Q)$ satisfies the same hypothesis as $\beta(N)$. By induction on the number of generators, we see that $Q$ is a direct sum of a free module $G$ and a module $H$ of codimension $\geq 2$. Since $\Ext^{1}(G, S) = \Ext^{1}(H,S) = 0$, the sequence splits.
\end{proof}
\begin{example}
The inequality appearing in Lemma~\ref{lem:0free} is necessary.  For instance, let $S=k[x,y]$ and let $N:=S(-1)\oplus S/(x^2,xy)$.  Then
\[
\beta(N)=\begin{pmatrix}
1&-&-\\
1&2&1
\end{pmatrix}=
\begin{pmatrix}-&-&-\\
1&2&1
\end{pmatrix}
+
\begin{pmatrix}1&-&-\\
-&-&-
\end{pmatrix}.
\]
Thus $\beta(N)$ has the form $D_{\geq 2}+D_{\text{free}}$.  But $N\ncong S\oplus S(-1)/(x,y)$.
\end{example}
\begin{example}
The conclusion of Lemma~\ref{lem:0free} may fail without the hypothesis ``codimension $\geq 2$''.  For instance, if $S=k[x,y]$ and $\mathfrak m=(x,y)$, then
\[
\beta(\mathfrak m)=\begin{pmatrix}
2&1
\end{pmatrix}=
\begin{pmatrix}1&1
\end{pmatrix}
+
\begin{pmatrix}1&-
\end{pmatrix},
\]
but $\mathfrak m$ does not split.
\end{example}

We are now ready to complete the proof of our main result:

\begin{proof}[Proof of Theorem~\ref{thm:puresexist}.]
We first prove part \eqref{thm:puresexist:1}.  We let $\NF^M$ be the North fork of $F^M$, and we define $N:=\coker(\phi(\mathbf{f})^M_1)$.  Since $M$ satisfies the hypotheses of Theorem~\ref{thm:puresexist}, we may apply Corollary~\ref{cor:0free} and Lemma~\ref{lem:0free}, and conclude that $N=M'\oplus G$ where $\beta(M')=c_0\pi_{d^0}$ and $G$ is free.

We may then rewrite $\phi(\mathbf{f})^M_1$ as a block matrix $\phi(\mathbf{f})^M_1=\begin{pmatrix} \widetilde{a}_1 \\ 0 \end{pmatrix}$, where $\widetilde{a}_1$ is a minimal presentation matrix of $M'$.  This enables us to rewrite $\phi_1$ in upper triangular form:
\[
\phi_1=\begin{pmatrix}\widetilde{a}_1 & \widetilde{b}_1\\ 0 & \widetilde{c}_1  \end{pmatrix}
\]
for some matrices $\widetilde{b}_1$ and $\widetilde{c}_1$.  Since $M$ is presented by a block triangular matrix, we obtain a right exact sequence:
\[
M'\to M\to \coker(\widetilde{c}_1)\to 0.
\]

To finish the proof, we will apply Lemma~A.2 to show that this sequence is exact on the left and that $M'$ is a cleanly embedded submodule.  
Note that, by the definition of $N$ we have $f_2=d^1_2$, which satisfies the conditions of Lemma~A.2 by definition of $d^0\ll d^1$ (see~\eqref{defn:ll} on page \pageref{defn:ll}).  In fact, the definition of $d^0\ll d^1$ further implies that $d^0_i\leq d^1_2+i-2$ for all $i\geq 1$.  For $i>2$ this is part of the definition, and the fact that $d^0<d^1$ and $d^0_2\leq d^1_1$ imply that $d^0_i\leq d^1_2+i-2$ for $i\leq 2$ as well.  Thus, for any $i\geq 1$, Lemma~A.2(2) implies that $\Tor_i(N,k)_{d^0_i}$ (which equals $\Tor_i(M',k)_{d^0_i}$) injects into $\Tor_i(M,k)_{d^0_i}$.  Since $M'$ and $M$ are both finite length and since $M'$ has a pure resolution, the inclusion in the case $i=n$ implies that $M'\to M$ is injective as claimed; the other inclusions on $\Tor$ groups imply that $M'\subseteq M$ is cleanly embedded, completing the proof of \eqref{thm:puresexist:1}.

For~\eqref{thm:puresexist:2}, since $d^0\ll d^1$ we obtain a cleanly embedded submodule $M'\subseteq M$ with $\beta(M')=c_0\pi_{d^0}$.  Set $M'':=M/M'$.  The sequence
\[
0\to M'\to M\to M''\to 0
\]
corresponds to an element $\alpha\in \Ext^1(M'',M')$, which then corresponds to a cocycle $\alpha_0\in \Hom(F_1^{M''}, M')$.  
Since
% $M'$ is generated in degree $d^0_0$, and 
$F^{M''}_1$ is generated in degree at least $d^1_1$, it follows that the image of the map $\alpha_0$ is generated in degree at least $d^1_1$.
However, since $\beta(M')=c_0\pi_{d^0}$, we see that $M'$ has regularity $d^0_{n}-n$, and thus is zero in degrees $>d^0_{n}-n$.  By our assumption 
\[
d^1_1> d^0_n-n,
\]
so the image of $\alpha_0$ is $0$.  We conclude that $\alpha$ corresponds to the zero element of $\Ext^1(M'',M')$, and thus that $M\cong M'\oplus M''$, as desired.
\end{proof}

\begin{proof}[Proof of Corollary~\ref{cor:puresexist-iterated}] 
With notation as in Theorem~\ref{thm:puresexist}, we choose $M^{1} = M'$.  The proof of Theorem~\ref{thm:puresexist} shows that, for degree reasons, the induced map $F^{M^{1}}_j \to F^{M}_j$ is a split injection for all $j$. It follows that 
$$
\beta(M/M^{1}) = \sum_{i=1}^s c_{i}\pi_{d^i},
$$
so we may iterate the construction.
\end{proof}

\begin{example}
There exist cases covered by Corollary~\ref{cor:puresexist-iterated} where a full clean filtration exists, but where that filtration is not a splitting: Let $S=k[x,y,z]$ and let $\Phi$ be a generic $9\times 9$ skew-symmetric matrix of linear forms. Let $I\subseteq S$ be the ideal generated by the $8\times 8$ principal Pfaffians of $\Phi$, and let $R=S/I$.  Then $R$ has a pure resolution of type $(0,4,5,9)$.  We claim that if $M$ is a generic extension
\[
0\to R\to M\to R(-2)\to 0,
\]
then $M$ admits a full clean filtration which is not a splitting.

Note first that, for any such extension, $R\to M$ is cleanly embedded for degree reasons.  Namely, if we construct a resolution of $M$  by combining the resolutions of $R$ and $R(-2)$, then there is no possibility of cancellation.  It thus suffices to show that $\Ext^1(R,R)_2\ne 0$.  
Such an extension corresponds to a nonzero map $\alpha_0: F^{R(-2)}_1=S(-6)^9\to R$ such that $\alpha_0\circ \Phi=0$.  Since $R$ has regularity $6$ and $\im(\alpha_0\circ \Phi)\subseteq R_7=0$, we see that $\alpha_0\circ \Phi$ is automatically $0$.  One may easily check that there exists such an $\alpha_0$ that is not a coboundary.
\end{example}

\begin{example}\label{ex:old example}
For $n>2$, fix any $e\geq 2$, and let $M$ be any module such that
$
\beta(M)$ decomposes as a sum of the pure diagrams $\pi_{(0,e,e+1,e+2,\dots,e+n-2,e+n-1)}$ and $\pi_{(0,1,2,\dots,n-1,e+n-1)}.$
Then $M$ has a Betti diagram of the form:
\[
\beta(M)=\begin{pmatrix}
*&*&*&\dots &*&-\\
-&-&-&\dots &-&-\\
\vdots&\vdots&& &\vdots&\vdots\\
-&-&-&\dots &-&-\\
-&*&*&\dots &*&*
\end{pmatrix}.
\]
Theorem~\ref{thm:puresexist}\eqref{thm:puresexist:2} implies that $M$ splits as $M=M'\oplus M''$ where $M'$ has a pure resolution of type $(0,e,e+1,e+2,\dots,e+n-2,e+n-1)$ and $M''$ has a pure resolution of type $(0,1,2,\dots,n-1,e+n-1)$.  Note that every $S$-module with a pure resolution of type $(0,e,e+1,e+2,\dots,e+n-2,e+n-1)$ is a direct sum of copies of $R:=S/\mathfrak m^e$.  It follows that $M'$  is isomorphic to a direct sum of copies of $R$.  By a similar argument, $M''$ is isomorphic to a number of copies of $\omega_R(n)$.  Hence, any such $M$ decomposes as $M=R^a\oplus \omega_R(n)^b$ for some $a,b$.  
\end{example}

%%%%%%%%%%%%%%%%%%%%%%%%%%%%%
%%%%%%%%%%%%%%%%%%%%%%%%%%%%%
\section{Beyond Theorem~\ref{thm:puresexist}}\label{sec:beyond}
%%%%%%%%%%%%%%%%%%%%%%%%%%%%%
%%%%%%%%%%%%%%%%%%%%%%%%%%%%%

Since the Boij-S\"oderberg decomposition of a module may involve pure diagrams with non-integral entries, it is clear that there exist many graded modules which do not admit full clean filtrations. 
\begin{example}\label{ex:no filtration}
Let $n=2$, $R=k[x,y]/(x,y)^2,$ and $M=k[x,y]/(x,y^2)$.  Then:
\[
\beta(M)=\begin{pmatrix} 1&1&-\\-&1&1\end{pmatrix} = \frac{1}{3}\beta(R)+\frac{1}{3}\beta(\omega_R(4)).
\]
Clearly $M$ cannot admit a full clean filtration.  Though we might hope that $M^{\oplus 3}$ admits such a filtration, this is not the case either~\cite[Ex.\ 4.5]{sam-weyman}.

However, there does exist a flat deformation $M'$ of $M^{\oplus 3}$ such that $M'$ admits a full clean filtration:
\[
0 \to R \to M' \to \omega_R(4) \to 0.
\]
Namely, we may set $M'=\left( S/(x,y^2) \right) \oplus \left( S/(x^2,y) \right)\oplus \left( S/(x+y,(x^2-2y+y^2) \right)$.  This suggests a more subtle possible affirmative answer to our Question~\ref{question:beyond}. \qed
\end{example}

Each result of \S\ref{sec:North fork}--\ref{sec:obtaining} can be extended to situations that are not covered by the hypotheses of Theorem~\ref{thm:puresexist}.  
%For instance, we can obtain a cleanly embedded (but not necessarily pure) submodule of $M$ in many more situations.

\begin{example}\label{ex:branching}
Let $E:=\widetilde\pi_{(0,2,3,4,5,8)}+2\widetilde\pi_{(0,2,3,5,6,8)}+\widetilde\pi_{(0,3,4,5,6,8)}+\widetilde\pi_{(0,3,4,6,7,8)}$, and let $M$ be a module such that $\beta(M)=E$. We have
\[
E=\begin{pmatrix}
11 & - & - & - & -&-\\
- & 60 &128 &90 &32&-\\
- & 144 &300 &128 &60&-\\
- & - & -& 280&240&69
\end{pmatrix}.
\]
Note that the degree sequences do not satisfy the conditions of Corollary~\ref{cor:puresexist-iterated}.   Nevertheless, we will see that that $M$ admits a full clean filtration.

We first construct a cleanly embedded (but not pure) submodule of $M$.  We let $\NF^M$ be the North fork of $F^M$ and we let $N:=\coker(\phi(\mathbf{f})^M_1)$.  The proof of Corollary~\ref{cor:0free} applies nearly verbatim to yield
\[
\beta(N)=\widetilde\pi_{(0,2,3,4,5,8)}+2\widetilde\pi_{(0,2,3,5,6,8)}+6\widetilde\pi_{(0)}.
\]
By Lemma~\ref{lem:0free}, we obtain a splitting $N=M'\oplus G$ where $G$ is a free module.  The arguments used in the proof of Theorem~\ref{thm:puresexist} then imply that $M'$ is a cleanly embedded submodule of $M$.  We thus have a short exact sequence
\[
0\to M'\to M\to M''\to 0
\]
where $\beta(M')=\widetilde\pi_{(0,2,3,4,5,8)}+2\widetilde\pi_{(0,2,3,5,6,8)}$ and $\beta(M'')=\widetilde\pi_{(0,3,4,5,6,8)}+\widetilde\pi_{(0,3,4,6,7,8)}$.  

Repeating the same argument for $(M')^\vee$ and for $(M'')^\vee$, and then applying Lemma~\ref{lem:pures extensions}, we conclude that $M$ admits a full clean filtration.
\end{example}

One of the key features of our proof of Theorem~\ref{thm:puresexist} is that the diagrams $d^0, \dots, d^s$ that arise in the Boij-S\"oderberg decomposition are separated from each other in the poset of degree sequences.  In particular, $d^0$ and $d^{1}$ always differ in at least two consecutive positions.  This is essential to our proof of Corollary~\ref{cor:0free}, and it suggests some interesting examples to explore.

Consider, for example, the diagrams $D=\widetilde\pi_{(0,1,3,5)}+\widetilde\pi_{(0,2,4,5)}$ and $D'=\widetilde\pi_{(0,1,2,3,5,6)}+\widetilde\pi_{(0,1,3,4,5,6)}$, so that
\[
D=\begin{pmatrix}
11&15&-&-\\
-&10&10&-\\
-&-&15&11
\end{pmatrix}
\qquad
\text{ and }
\qquad
D'=
\begin{pmatrix}
3&12&15&10&-&-\\
-&-&10&15&12&3
\end{pmatrix}.
\]
\begin{question}
Let $\beta(M)$ be a scalar multiple of either $D$ or $D'$.  Does $M$ admit a cleanly embedded submodule with a pure resolution?
\end{question}

\begin{remark}
Although many aspects of our technique apply to modules of dimension greater than $0$, there is one obstacle to extending our results to such modules.  Let $M$ be a module of nonzero Krull dimension that otherwise satisfies the hypotheses of Theorem~\ref{thm:puresexist}, and define $N$ via the North fork as in the proof of Theorem~\ref{thm:puresexist}.  It is possible that the projective dimension of $N$ could be larger than the projective dimension of $M$, and this possibility undermines our application of the monotonicity principle.  It would thus be interesting to produce a positive answer to Question~\ref{question:beyond1} for some case where the dimension of $M$ is nonzero.
\end{remark}
%
%S=QQ[x,y,z,w]
%M=random(S^{0,1,1,1,1},S^{0,-1,-1,-1,-1});
%phi=M-transpose(M);
%I=pfaffians(4,M);
%J=ideal(I_0,I_1,I_2,I_3);
%betti res I
%betti res J

%%%%%%%%%%%%%%%
%%%%%%%%%%%%%%%
\section{Application:  Pathologies of $\Bmod$}\label{sec:pathologies}
%%%%%%%%%%%%%%%
%%%%%%%%%%%%%%%
Example~\ref{ex:first filtration} illustrates the existence of a ray of $\BQ$ where only $\frac{1}{5}$ of the lattice points correspond to Betti diagrams of modules.  We now prove Proposition~\ref{prop:sparserays}, which implies that there are rays where the true Betti diagrams are arbitrarily sparse among the lattice points.  The proof will show that such pathologies already arise in codimension $3$.
\begin{proof}[Proof of Proposition~\ref{prop:sparserays}]
Let $S=k[x_1, x_2, x_3]$ and let $p\geq 5$ prime.   Set $d^0=(0,1,2,p), d^1=(0,\lfloor p/2\rfloor, \lceil p/2 \rceil, p)$ and $d^2=(0,p-2,p-1,p)$.  Consider the diagram
\[
D=\frac{1}{p}\widetilde{\pi}_{d^0}+\frac{\alpha}{p}\widetilde{\pi}_{d^1}+\frac{1}{p}\widetilde{\pi}_{d^2}
\]
where $\alpha$ is any positive integer such $\alpha+1+\binom{p-1}{2}\equiv 0 \mod p$.  We claim that $D$ has integral entries but that $cD\in \Bmod$ if and only if $c$ is divisible by $p$.

We first check the integrality of $D$.  Observe that each Betti number of $\widetilde{\pi}_{d^0}$ is divisible by $p$ except for the $0$th Betti number;  each Betti number of $\widetilde{\pi}_{d^2}$ is divisible by $p$ except for the $3$rd Betti number; and the Betti numbers of $\widetilde{\pi}_{d^1}$ are $(1,p,p,1)$.  Hence, we only need to check that $\beta_{0,0}(D)$ and $\beta_{3,p}(D)$ are integral.  We compute
\[
\beta_{0,0}(D)=\frac{1}{p}\beta_{0,0}(\widetilde{\pi}_{d^0})+\frac{\alpha}{p}\beta_{0,0}(\widetilde{\pi}_{d^1})+\frac{1}{p}\beta_{0,0}(\widetilde{\pi}_{d^2})
=\frac{1}{p}+\frac{\alpha}{p}+\frac{\binom{p-1}{2}}{p}.
\]
Our assumption on $\alpha$ then implies that $\beta_{0,0}(D)$ is integral.  A symmetric computation works for $\beta_{3,p}(D)$.

Let $cD\in B_{\text{mod}}$, and we will show that $c$ is divisible by $p$.  Let $M$ be any module such that $\beta(M)=cD$.  Let $N$ be the cokernel of the submatrix of the presentation matrix of $M$ containing all of the degree $1$ and degree $\lfloor p/2\rfloor$ columns (so we throw away the degree $p-2$ columns).   Lemma~A.2\eqref{lem:degs:ii} and \eqref{lem:degs:iii} then imply that $\beta_{i,j}(N)=\beta_{i,j}(M)$ for $i=2,3$ and $j<p+i-3$.

In particular, the top strand of $\beta(N)$ is at least $(0,1,2,p)$, and so we may use the monotonicity principle to conclude that the first step of the Boij--S\"oderberg decomposition of $\beta(N)$ is given by $\frac{c}{p}\widetilde{\pi}_{d^0}$.  The top strand of the resulting diagram $\beta(N)-\frac{c}{p}\widetilde{\pi}_{d^0}$ is then at least $(0,\lfloor p/2\rfloor,\lceil p/2\rceil,p)$, and an additional application of the monotonicity principle yields the full Boij--S\"oderberg decomposition of $\beta(N)$ to be:
\[
\beta(N)=\frac{c}{p}\widetilde{\pi}_{d^0}+\frac{c\alpha}{p}\widetilde{\pi}_{d^1}+\frac{c}{p}\pi_{(0,\infty,\dots,\infty)}.
\]
By Lemma~5.1, $N$ splits off a free summand $S^{\frac{c}{p}}$, and hence $p$ divides $c$.

It now suffices to show that $pD\in \Bmod$.  This follows from the fact that $\widetilde{\pi}_{d^i}\in \Bmod$ for $i=0,1$ or $2$.  In particular, $\widetilde{\pi}_{d^2}=\beta(R)$ where $R:=S/(x_1,x_2,x_3)^{p-2}$, and $\widetilde{\pi}_{d^0}=\beta(R^\vee(p-3))$.  To see that $\widetilde{\pi}_{d^1}\in \Bmod$, let $A$ be a $p\times p$ skew-symmetric matrix of generic linear forms.  By~\cite{buchs-eis-codim3}, the principal Pfaffians of $A$ define an ideal $I\subseteq S$ such that $\beta(S/I)=\widetilde{\pi}_{d^1}$.
%\dan{Fill in more precise reference.}

This completes the proof when $p\geq 5$.  For the cases $p=2$ (respectively $3$), we may choose the diagram $D=\frac{1}{2}\widetilde{\pi}_{(0,1,2,4)}+\frac{1}{2}\widetilde{\pi}_{(0,2,3,4)}$ (respectively $D=\frac{1}{3}\widetilde{\pi}_{(0,1,2,5)}+\frac{2}{3}\widetilde{\pi}_{(0,3,4,5)}$) and apply similar arguments as above.
\end{proof}

%%%%%%%%%%%%%%%%%%%%%%%%%%
%%%%%%%%%%%%%%%%%%%%%%%%%%
\section{Application: Quiver representations}\label{sec:gensBmod}
%%%%%%%%%%%%%%%%%%%%%%%%%%
%%%%%%%%%%%%%%%%%%%%%%%%%%
In this section, we determine all Betti diagrams corresponding to quiver representations of the form $
 \bullet \triplearrow{}{} \bullet$.  As discussed in the introduction, this is equivalent to computing the possible Betti diagrams of finite length modules of the form:
 \[
\beta(M)=\begin{pmatrix} \beta_{0,0} & \beta_{1,1} & \beta_{2,2} & \beta_{3,3} \\
\beta_{0,1} & \beta_{1,2} & \beta_{2,3} & \beta_{3,4}
\end{pmatrix}.
\]
Throughout this section, we thus set $d^0=(0,1,2,3), d^1=(0,1,2,4), d^2=(0,1,3,4), d^3=(0,2,3,4)$ and $d^4=(1,2,3,4)$ and we let $\widetilde{\Delta}=(d^0,d^1,d^2,d^3,d^4)$.  

Our goal is to compute the minimal generators of $\Bmod(\widetilde{\Delta})$. In addition to the connection with quiver representations, this computation provides the first detailed and nontrivial example of the generators of $\Bmod(\widetilde{\Delta})$. Further, this computation illustrates that the Monotonicity Principle and some of the other techniques introduced in \S\ref{sec:North fork}-\ref{sec:obtaining} can be extended to more situations, but at the cost of wrestling with integrality conditions and precise numerics.

As noted in the introduction, if $\beta_{3,3}(M)$ (or $\beta_{0,1}(M)$) is nonzero, then a copy of the residue field $k$ (or $k(-1)$) splits from $M$.  It is therefore equivalent to restrict to the case where $\beta_{3,3}=\beta_{0,1}=0$ and to compute the generators for $\Bmod(\Delta)$ where $\Delta=(d^1,d^2,d^3)$.  The result of this computation is summarized in the following proposition.
\begin{prop}\label{prop:gensBmod}
The semigroup $\Bmod(\Delta)$ has ten minimal generators.  These consist of the following ten Betti diagrams:
\[
\begin{pmatrix}
3&8&6&-\\
-&-&-&1
\end{pmatrix},
\begin{pmatrix}
1&-&-&-\\
-&6&8&3
\end{pmatrix},
\begin{pmatrix}
1&2&1&-\\
-&1&2&1
\end{pmatrix},
\begin{pmatrix}
1&1&-&-\\
-&3&5&2
\end{pmatrix},
\begin{pmatrix}
2&5&3&-\\
-&-&1&1
\end{pmatrix},
\]
\
\[
\begin{pmatrix}
2&4&1&-\\
-&1&4&2
\end{pmatrix},
\begin{pmatrix}
3&7&3&-\\
-&-&3&2
\end{pmatrix},
\begin{pmatrix}
2&3&-&-\\
-&3&7&3
\end{pmatrix},
\begin{pmatrix}
2&4&-&-\\
-&-&4&2
\end{pmatrix}
\begin{pmatrix}
3&6&-&-\\
-&-&6&3
\end{pmatrix}
.
\]
\end{prop}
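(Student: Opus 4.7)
The plan is to parametrize $\BQ(\Delta)$ by the Boij-S\"oderberg coefficients $(c_1, c_2, c_3) \in \QQ_{\geq 0}^3$ of $D = c_1\widetilde{\pi}_{d^1} + c_2\widetilde{\pi}_{d^2} + c_3\widetilde{\pi}_{d^3}$, and to pin down $\Bmod(\Delta)$ as a sub-semigroup of this cone. Writing out the eight nonzero entries of $D$ in terms of $(c_1, c_2, c_3)$---for example $\beta_{2,2} = 6c_1$ and $\beta_{1,2} = 6c_3$, which force $c_1, c_3 \in \tfrac{1}{6}\ZZ_{\geq 0}$, while $\beta_{1,1} = 8c_1 + 2c_2$ and $\beta_{2,3} = 2c_2 + 8c_3$ impose further Diophantine constraints---describes $\Bint(\Delta)$ explicitly. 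The task is then twofold: (i) exhibit an actual finite length module realizing each of the ten listed Betti diagrams, and (ii) show every $D \in \Bmod(\Delta)$ is a $\ZZ_{\geq 0}$-combination of these ten.

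For (i), $\widetilde{\pi}_{d^1}$ and $\widetilde{\pi}_{d^3}$ are realized by $\omega_R(3)$ and $R = S/\mathfrak{m}^2$ respectively, in the spirit of the proof of Proposition~\ref{prop:sparserays}. Both $2\widetilde{\pi}_{d^2}$ and $3\widetilde{\pi}_{d^2}$ are realized by variants of the Pfaffian construction; note that $\widetilde{\pi}_{d^2}$ itself cannot be a Betti diagram of a finite length module, since any such module would be cyclic with two linear relations and hence would have positive-dimensional support. The six mixed generators are realized as direct sums and generic extensions of the above modules together with small finite length quotients such as $S/(x_1^2, x_2, x_3)$; in each case the Betti diagram is computed directly from the spliced resolution (appealing to Lemma~\ref{lem:pures extensions} as needed to produce a full clean filtration).

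For (ii), the key structural observation is that $d^1 \ll d^3$ (since $d^1_2 = 2 \leq 2 = d^3_1$) and moreover $d^1_3 - 3 = 1 < 2 = d^3_1$, so Theorem~\ref{thm:puresexist}\eqref{thm:puresexist:2} applies whenever $c_2 = 0$: such a $D$ is realized by a module $M = M' \oplus M''$ with $\beta(M') = c_1\widetilde{\pi}_{d^1}$ and $\beta(M'') = c_3\widetilde{\pi}_{d^3}$, reducing the $c_2 = 0$ stratum to the one-variable problem on each ray. The heart of the proof is the case $c_2 > 0$, where neither $d^1 \ll d^2$ nor $d^2 \ll d^3$ holds and Theorem~\ref{thm:puresexist} does not apply directly. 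The plan here is to imitate the argument of Example~\ref{ex:branching}: form the North fork $\NF^M$, apply the Monotonicity Principle (Theorem~\ref{lem:monotonicity}) to pin down the Betti diagram of $N := \coker(\phi(\ve{f})^M_1)$, extract a cleanly embedded submodule $M' \subseteq M$, and iterate on the quotient (possibly after dualizing so as to apply the same strategy from the other end of the complex). The main obstacle is the combinatorial bookkeeping: the Monotonicity Principle yields strict ratio inequalities rather than equalities once $c_2 > 0$, so one must simultaneously combine numerical constraints from both ends of the resolution (exploiting the $M \mapsto M^\vee$ duality between $d^1$ and $d^3$) to bound $(c_1, c_2, c_3)$ modulo the small integers forced by integrality, after which each residue class is settled either by a construction as in (i) or by a non-existence argument via Theorem~\ref{thm:puresexist} applied to a suitable sub- or quotient module.
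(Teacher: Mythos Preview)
Your plan is essentially the paper's: parametrize $\Bint(\Delta)$ by the three Boij--S\"oderberg coefficients (the paper writes $D=4r\pi_{d^1}+2s\pi_{d^2}+4t\pi_{d^3}$ and records the congruences on $(r,s,t)$), realize the ten diagrams by explicit modules, and then run the North fork/Monotonicity machinery to exclude the remaining lattice points. Your $c_2=0$ reduction via Theorem~\ref{thm:puresexist}\eqref{thm:puresexist:2} is exactly the paper's $s=0$ case (their Example~\ref{ex:old example}).

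Two places where your sketch is looser than the paper and would need tightening. First, in (i) the phrase ``direct sums and generic extensions of the above modules together with small finite length quotients'' does not obviously produce all six mixed generators: for instance the diagram $\bigl(\begin{smallmatrix}2&4&1&-\\-&1&4&2\end{smallmatrix}\bigr)$ has no pure summand with integral entries, so it cannot be built from the pure pieces alone, and the paper instead exhibits it as the cokernel of an explicit $2\times 5$ matrix; similarly $\bigl(\begin{smallmatrix}2&3&-&-\\-&3&7&3\end{smallmatrix}\bigr)$ is obtained as $N/\mathfrak m^2 N$ for a specific $N$. These are ad~hoc constructions, not instances of a uniform recipe. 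Second, in (ii) the non-existence arguments are not applications of Theorem~\ref{thm:puresexist} to a sub- or quotient module, since neither $d^1\ll d^2$ nor $d^2\ll d^3$ holds; rather (and you say this earlier, correctly) one forms the North fork $N$, uses Monotonicity to force the first steps of the Boij--S\"oderberg decomposition of $\beta(N)$, and obtains a contradiction because some entry of $\beta(N)$ is then non-integral. The paper organizes this by a case split on the middle parameter $s\in\{0,1,2,3,4,5,6,>6\}$: for $s=1,2,4$ the North fork argument rules out the ``bad'' congruence family; for $s=3,5,6$ and $s>6$ every lattice point (except $(0,6,0)$, excluded by a separate argument) is already a nonnegative integer combination of the ten listed diagrams. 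That finite case check is the ``combinatorial bookkeeping'' you allude to, and it is where the proof actually lives.
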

See Figure~\ref{fig:slice}.
Before proving this proposition, we introduce some simplifying notation.  Every element of $\Bint(\Delta)$ can be represented as:
\[
D=4r\pi_{(0,1,2,4)}+2s\pi_{(0,1,3,4)}+4t\pi_{(0,2,3,4)}
\]
with $(r,s,t)\in \mathbb Z_{\geq 0}^3$
(c.f. \cite[pp.\ 347--9]{erman-semigroup}.)
The necessary and sufficient conditions for a triplet $(r,s,t)\in \mathbb Z_{\geq 0}^3$ to yield an integral point are:
\begin{itemize}
    \item  $r+s\equiv 0 \mod 3$
    \item  $s+t\equiv 0 \mod 3$
    \item  $r+s+ t \equiv 0 \mod 2$.
\end{itemize}
For the rest of this section, we use triplets $(r,s,t)$ to refer to diagrams in $\Bint(\Delta)$, and we only consider triplets $(r,s,t)$ that satisfy the above congruency conditions.  In this notation, Proposition~\ref{prop:gensBmod} amounts to the claim that the following ten $(r,s,t)$ triplets are the generators of $\Bmod$:
\[
\begin{matrix}
(6,0,0), (0,0,6), (1,2,1), (3,3,0), (0,3,3), \\
 (1,8,1), (3,9,0), (0,9,3), (0,12,0), (0,18,0).
\end{matrix}
\]

%Also, the following specialized lemma will be used repeatedly.  Its proof follows from a minor variant of the proof of Corollary~\ref{cor:0free}.
%\begin{lemma}\label{lem:obvious}
%Let $D$ be a diagram about which we have partial knowledge, including knowledge of $\beta_{1,1}$ and $\beta_{2,2}$.  If $D\in \BQ$, then there must exist  a potential sequence for $d$ for $D$ such that 
%\begin{equation}\label{eqn:ratio3}
%\frac{\beta_{1,1}(D)}{\beta_{2,2}(D)}\geq \frac{\beta_{1,1}(\pi_{d})}{\beta_{2,2}(\pi_{d})}.
%\end{equation} 
%\end{lemma}

\begin{figure}
\usetikzlibrary[patterns]
\begin{tikzpicture}
\filldraw (0,0) circle (2pt);
\draw (-.5,-.5) node {$4c\pi_{(0,1,2,4)}$};
\filldraw (2,3.4) circle (2pt);
\draw (2,3.9) node {$4c\pi_{(0,2,3,4)}$};
\filldraw (4,0) circle (2pt);
\draw (4,-.5) node {$2c\pi_{(0,1,3,4)}$};
\draw[thin] (0,0)--(2,3.4)--(4,0)--(0,0);
\draw[thin, dashed] (.4,0)--(2.16,3.14);
\draw[thick,->] (.3,1.9)--(1.1,1.6);
\draw[thick,->] (3.7,1.9)--(2.9,1.6);
\draw(-1.3,1.9) node {The region $s<5$};
\draw(5.3,1.9) node {The region $s\geq 5$};
\fill[fill=gray,semitransparent](.4,0)--(2.16,3.14)--(4,0);]
\fill[pattern=dots, semitransparent] (0,0)--(.4,0)--(2.16,3.14)--(2,3.4);
%\filldraw[fill=gray, style=loosely dotted] (0,0) circle (10pt);
%\fill[fill=loosely dotted]
\end{tikzpicture}
\caption{Proposition~\ref{prop:gensBmod} can be illustrated by considering a slice of the cone $\BQ(\Delta)$ where $r+s+t=c$ for some $c\gg 0$.  In the region where $s<5$, roughly half of the lattice points in the cone belong to $\Bmod$.  In the region where $s\geq 5$, every lattice point in the cone belongs to $\Bmod$.}\label{fig:slice}
\end{figure}
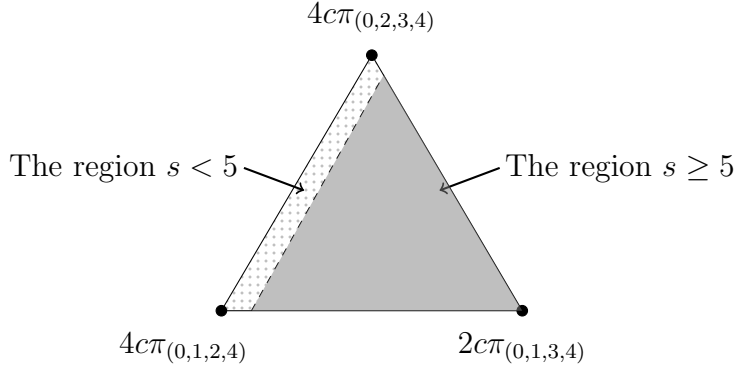

\begin{proof}[Proof of Proposition~\ref{prop:gensBmod}]
We first note that each of the ten diagrams listed in Proposition~\ref{prop:gensBmod} is the Betti diagram of an actual module.  When $\beta_{0,0}=1$ or $\beta_{3,4}=1$, such examples are straightforward to construct.  Next, we have
\[
\beta \left(\coker \begin{pmatrix}
x&y&0&0&z^2\\
0&x&y&z&x^2
\end{pmatrix}
\right)
=
\begin{pmatrix}
2&4&1&-\\
-&1&4&2
\end{pmatrix}.
\]
Let $L$ be any $2\times 3$ matrix of linear forms whose columns satisfy no linear syzygies, and let $N:=\coker(L)$.  Then
\[
\beta(N/\mathfrak m^2N)=\begin{pmatrix}2&3&-&-\\-&3&7&3  \end{pmatrix}
\]
The Betti diagram of $\left( N/\mathfrak m^2N\right)^\vee$ then yields the dual diagram.  Finally, examples corresponding to $(0,12,0)$ and $(0,18,0)$ are given in \cite[Proof of Thm.\ 1.6(1)]{erman-semigroup}.

We must now show that every diagram in $\Bmod(\Delta)$ may be written as a sum of our ten generators.  We proceed by analyzing cases based on the different possible values of $s$ in our $(r,s,t)$ representation of diagrams.
\subsection*{The case $s=0$}
Based on Example~\ref{ex:old example} in the case $n=3$ and $e=2$, we conclude that $(r,0,t)$ corresponds to an element of $\Bmod(\Delta)$ if and only if both $r$ and $t$ are divisible by $6$.

\subsection*{The case $s=1$}
There are two families of triplets $(r,1,t)$ satisfying the congruency conditions.  The first family is parametrized by $(2+6\gamma,1,5+6\alpha)$ for some $\gamma,\alpha \in \mathbb Z_{\geq 0}$, and the second family is parametrized by $(5+6\gamma,1,2+6\alpha)$.  To prove that none of these diagrams belongs to $\Bmod(\Delta)$, it suffices (by symmetry under $M\mapsto M^\vee$) to rule out the first family.

We thus assume, for contradiction, that there exists $M$ such that $\beta(M)$ corresponds to the triplet $(2+6\gamma, 1, 5+6\alpha)$ for some $\alpha, \gamma\in \mathbb Z_{\geq 0}$.  We let $F(\mathbf{f})^M$ be the North fork of $F^M$.  We then set $N:=\coker( \phi(\mathbf{f})^M_1)$, and we have
\[
\beta(N) =\begin{pmatrix}2+\alpha+3\gamma & 3+8\gamma & 2+6\gamma& - \\
- & - & \beta_{2,3}(N)& \beta_{3,4}(N)\\
- & - & \beta_{2,4}(N) & \beta_{3,5}(N)\\
-&-&\vdots & \vdots
 \end{pmatrix}.
\]
To produce the Boij-S\"oderberg decomposition, we begin by subtracting $c_1\pi_{d^1}$ for some $c_1\geq 0$.   Note that
\[
c_1\pi_{d^1}=c_1\begin{pmatrix}\frac{1}{8}&\frac{1}{3}&\frac{1}{4}&-\\-&-&-&\frac{1}{24}  \end{pmatrix}.
\]

Assume first that $c_1<24\gamma$, so that $\frac{c_1}{4}<6\gamma$.  In this case, the greedy decomposition algorithm must eliminate the $\beta_{3,4}$ first (or $c_1=0$ and $\beta_{3,4}=0$ to begin with).  The resulting diagram $\beta(N)-c_1\pi_{d^1}$ has $\beta_{3,4}=0$ and the ratio $\beta_{1,1}/\beta_{2,2}$ is strictly larger than $3/2=\beta_{1,1}(\pi_{(0,1,2,5)})/\beta_{1,1}(\pi_{(0,1,2,5)})$.  The monotonicity implies that this is impossible.
Hence, we must have $c_1\geq 24\gamma$.

If now $c_1=24\gamma$, then we may again apply the monotonicity principle to $\beta(N)-24\gamma\pi_{d^1}$ to conclude that the next step of the Boij-S\"oderberg decomposition must be precisely $\frac{1}{5}\pi_{(0,1,2,5)}$.  This would leave nothing left in column $1$, and thus $\beta(N)-\gamma\pi_{d^1}-\frac{1}{5}\pi_{(0,1,2,5)}$ would be a diagram of projective dimension $0$.  But this would contradict the integrality of $\beta(N)$, since it would imply that $\beta_{3,5}(N)=\frac{1}{5}$.

The final possibility is that $c_1>24\gamma$.  Since $c_1<48\gamma$, this implies that we must eliminate the $\beta_{2,2}$ entry first and hence that $c_1$ must equal $8+24\gamma$.  After subtracting $(8+24\gamma)\pi_{d^1}$, we are left with:
\[
\beta(N)-\left(8 +24\gamma\right)\pi_{(d^1)}
=
\begin{pmatrix}1+\alpha& \frac{1}{3} &- & -\\
 - & -& \beta_{2,3}(N) & \beta_{3,4}(N)-(\frac{1}{3}+\gamma)\\
 -&-&\vdots&\vdots
 \end{pmatrix}.
\]
Since $\beta_{3,4}(N)-(\frac{1}{3}+\gamma)$ is nonzero (it is not an integer), the next step of the Boij-S\"oderberg decomposition must eliminate this entry.  This means that the next step of the decomposition must be $\frac{1}{6}\pi_{d^2}$.  However, this would leave a $0$ in column $1$ and a nonzero entry in column $2$, which is impossible.

%Since $\beta_{3,4}(N)-(\frac{1}{3}+\gamma)$ is nonzero (it is not an integer), the next step of the Boij-S\"oderberg decomposition must eliminate this entry.  This means that the next step of the decomposition must be $4\pi_{d^2}$.  However, this would leave a zero in column $1$ and a nonzero entry in column $2$, which is impossible.

\subsection*{The case $s=2$}
There are two families of triplets $(r,2,t)$ satisfying the congruency conditions.  The first family has the form $(1+6\gamma, 2, 1+6\alpha)$ and the second family has the form $(4+6\gamma, 2, 4+6\alpha)$, where $\gamma, \alpha \in \mathbb Z_{\geq 0}$.  Every element of the first family is a sum of our proposed generators, so we must show that no element of the second family belongs to $\Bmod(\Delta)$.  We obtain a contradiction by essentially the same analysis as in the case $s=1$.

\subsection*{The case $s=4$}
There are two families of triplets $(r,4,t)$ satisfying the congruency conditions, namely $(2+6\gamma, 4, 2+6\alpha)$ and $(5+6\gamma, 4, 5+6\alpha)$.  Since every element of the first family is a sum of our proposed generators, we must show that no element of the second family belongs to $\Bmod(\Delta)$.  A similar, though more involved, analysis as in the case $s=1$ then illustrates that there are no such diagrams.

\subsection*{The cases $s=3,5,6$}
We claim that if $D\in \Bint(\Delta)$ corresponds to an $(r,s,t)$-triplet where $s=3,5,$ or $6$, then $D\in \Bmod(\Delta)$, with the exception of $(0,6,0)$.  There are six families to consider in total: $(3+6\gamma,3,6\alpha), (6\gamma,3,3+6\alpha), (4+6\gamma,5,1+6\alpha), (1+6\gamma,5,4+6\alpha), (3+6\gamma,6,3+6\alpha),$ and $(6\gamma,6,6\alpha)$.  Any element from any of these families may be written as a sum of our proposed generators, except for $(0,6,0)$.  The diagram corresponding to $(0,6,0)$ does not belong to $\Bmod$ by~\cite[Proof of Thm.\ 1.6(1)]{erman-semigroup}.

\subsection*{The cases $s>6$}
One may directly check that all elements of $\Bint(\Delta)$ with $s>6$ can be written as an integral sum of the proposed generators.
\end{proof}

\section*{Acknowledgements}
We thank Christine Berkesch, Jesse Burke, Courtney Gibbons, Steven Sam and Jerzy Weyman for useful conversations.  We thank the anonymous referee for a careful reading that improved the paper.  We also thank Amin Nematbakhsh and to Gunnar Fl{\o}ystad for bringing to our attention a mistake in the proof of Corollary~\ref{cor:0free} in a previous version.  The computer
algebra system \texttt{Macaulay2} \cite{M2} provided valuable
assistance in studying examples. 

\bibliographystyle{amsalpha}
\bibliography{filtering.bib}

\end{document}